\theoremstyle{plain}
\newtheorem{Thm}{Theorem}[section]
\newtheorem{Lem}{Lemma}[section]
\newtheorem{Prop}{Proposition}[section]
\newtheorem{Ex}{Example}[section]
\numberwithin{equation}{section}
\theoremstyle{definition}
\theoremstyle{remark}
\begin{document}

\title[chain cycle]{Partition dimension and strong metric dimension of chain cycle}

\author{Talmeez Ur Rehman $^{[1]}$ and Naila Mehreen $^{[2]}$} 
\date{June 30, 2020}

\address[1]{School of Natural Sciences,
	National University of Sciences and Technology,
	H-12 Islamabad, Pakistan.}

\email{talmeezurehman@gmail.com}

\address[2]{School of Natural Sciences,
	National University of Sciences and Technology,
	H-12 Islamabad, Pakistan.}
\email{nailamehreen@gmail.com}

\keywords{Partition dimension, strong metric dimension, chain cycle}

\subjclass[2000]{05C12}

\begin{abstract}
Let $G$ be a connected graph with vertex set $V(G)$ and edge set $E(G)$. For an ordered $k$-partition $\Pi=\{Q_1,\ldots,Q_k\}$ of $V(G)$, the representation of a vertex $v \in V(G)$ with respect to $\Pi$ is the $k$-vectors $r(v|\Pi)=(d(v,Q_1),\ldots,d(v,Q_k))$, where $d(v,Q_i)$ is the distance between $v$ and $Q_i$.  The partition $\Pi$ is a resolving partition if $r(u|\Pi)\neq r(v|\Pi)$, for each pair of distinct vertices $u,v \in V(G)$. The minimum $k$ for which there is a resolving $k$-partition of $V(G)$ is the partition dimension of $G$. A vertex $w\in V(G)$ strongly resolves two distinct vertices $u,v \in V(G)$ if $u$ belongs to a shortest $v-w$ path or $v$  belongs to a shortest $u-w$ path. An ordered set $W=\{w_{1},\ldots, w_{t}\}\subseteq V(G)$ is a strong resolving set for $G$ if for every two distinct vertices $u$ and $v$ of $G$ there exists a vertex $w\in W$ which strongly resolves $u$ and $v$.
A strong metric basis of $G$ is a strong resolving set of minimal cardinality. The cardinality of a strong metric basis is called strong metric dimension of $G$. In this paper, we determine the partition dimension and strong metric dimension of a chain cycle constructed by even cycles and a chain cycle constructed by odd cycles.
\end{abstract}

\maketitle

\section{Introduction}
\label{IMV01.S.1}
Let $G$ be a finite, simple and connected graph with vertex set $V(G)$ and edge set $E(G)$. 
The distance between two vertices $u$ and $v$ of $G$ is the length of the shortest path from $u$ to $v$ in $G$ and is denoted by $d(u,v)$. Two distinct vertices $u$ and $v$ are called adjacent if there is an edge between them and denoted by $uv$. The degree of a vertex $u$ is the number of vertices adjacent to it and is denoted by $d_{G}(v)$ or simply $d(v)$. The set of neighborhood of a vertex $u\in V(G)$, denoted by $N(u)$, is the set of all vertices of $G$ that are adjacent to $u$.The diameter of a graph $G$, denoted by $d(G)$, is defined as $d(G)=max\{d(u,v)\mid u,v\in V(G)\}$ . A cycle of lenght $n$ is denoted by $C_{n}$. 

A vertex $w\in V(G)$ resolves two vertices $u$ and $v$ of $G$ if $d(w,u)\neq d(w,v)$.
An ordered set $W=\{w_{1},\ldots, w_{t}\}\subseteq V(G)$ is a resolving set for $G$ if for every two distinct vertices $u$ and $v$ of $G$ there exists a vertex $w\in W$ which resolves $u$ and $v$. The representation of a vertex $u\in V(G)$ with respect to $W$, denoted by $r_{G}(u|W)$, is defined by the $t$-vectors $r_{G}(u|W)=(d(u,w_{1}),d(u,w_{2}),\dots,d(u,w_{t}))$. The metric basis of $G$ is a resolving set of minimal cardinality. The cardinality of the metric basis is called metric dimension of $G$ and is denoted by
$\emph{dim}(G)$. The metric dimension of graphs was introduced independently by Harary and Melter in \cite{H1}. For more detail see \cite{A, C2, H1, I1, K2, T1}.

Later on the concept of partition dimension was given by Chartrand et al. \cite{C1} in $2000$. Given an ordered partition $\Pi=\{Q_1,\ldots,Q_t\}$ of the vertices of $G$, the partition representation of a vertex $u \in V(G)$ with respect to $\Pi$ is the vector $r(u|\Pi)=(d(u,Q_1),\ldots,d(u,Q_t)),$
where $d(u,Q_j)=\min\{d(u, q)\mid q\in Q_j\}$, for each $j=1,2,\ldots,t$.
The partition $\Pi$ is a resolving partition of $G$ if for every pair of distinct vertices $u,v \in V(G)$, $r(u|\Pi)\neq r(v|\Pi)$.
The partition dimension of $G$ is the cardinality of a minimum resolving partition of $G$ and is denoted by $\emph{pd}(G)$. See \cite{C1, J1, M2, R2, T2} for more results. 

Seb{\H{o}} and Tannier \cite{S1}, in $2004$, gave more strict version of metric dimension of a graph called the strong metric dimension of a graph. A vertex $w\in V(G)$ strongly resolves two distinct vertices $u,v \in V(G)$ if $u$ belongs to a shortest $v-w$ path or $v$  belongs to a shortest $u-w$ path. An ordered set $W=\{w_{1},\ldots, w_{t}\}\subseteq V(G)$ is a strong resolving set for $G$ if for every two distinct vertices $u$ and $v$ of $G$ there exists a vertex $w\in W$ which strongly resolves $u$ and $v$.
A strong metric basis of $G$ is a strong resolving set of minimal cardinality. The cardinality of a strong metric basis is called strong metric dimension of $G$ and is denoted by
$\emph{sdim}(G)$. For more detail, see \cite{K1,K3,O1,R1}.

A set $S$ of vertices of $G$ is a vertex cover of $G$ if every edge of $G$ is incident with at least one vertex of $S$. The vertex cover number of $G$, denoted by $\alpha(G)$, is the smallest cardinality of a vertex cover of $G$. The largest cardinality of a set of vertices of $G$, no two of which are adjacent, is called the independence number of $G$ and is denoted by $\beta(G)$. Since for any graph $G$ of order $n$ the complement of an independent set $S\subseteq V(G)$ is a vertex cover of $G$ and therefore $\alpha(G)+\beta(G)=n$.

A vertex $u\in V(G)$ is maximally distant from $v\in V(G)$, denoted by $u$MD$v$, if for every vertex $w$ in the neighborhood of $u$, $ d_{G}(v,w)\leq
d_{G}(u,v)$. If $u$ is maximally distant from $v$ and $v$ is
maximally distant from $u$, then we say that $u$ and $v$ are
mutually maximally distant and we denote it as $u$MMD$v$. The strong resolving graph of $G$ is a graph $G_{SR}$ whose vertex set is $V(G)$ and two vertices $u,v\in V(G)$ are adjacent in
$G_{SR}$ if and only if $u$MMD$v$. Oellermann and Peters-Fransen \cite{O1} showed that finding
the strong metric dimension of a connected graph $G$ is equivalent
to finding the vertex cover number of $G_{SR}$.
\begin{Thm}[Oellermann and Peters-Fransen \cite{O1}]\label{T3}
	For any connected graph G, $sdim(G)=\alpha(G_{SR})$.
\end{Thm}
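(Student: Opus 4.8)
The plan is to show that a set $W\subseteq V(G)$ is a strong resolving set of $G$ if and only if $W$ is a vertex cover of $G_{SR}$; equating the minimum cardinalities of the two kinds of set then gives the stated identity immediately. The whole argument rests on the distance reformulation of strong resolution: a vertex $w$ strongly resolves $u$ and $v$ exactly when $d(w,u)=d(w,v)+d(v,u)$ or $d(w,v)=d(w,u)+d(u,v)$, since a vertex lying on a shortest path is the same thing as equality holding in the triangle inequality. I would record this reformulation first, as it is used in both directions.

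First I would prove the easier inclusion, that every strong resolving set is a vertex cover of $G_{SR}$. Suppose $u$MMD$v$, so that $uv\in E(G_{SR})$, and suppose toward a contradiction that some $w\notin\{u,v\}$ strongly resolves $u$ and $v$; say $u$ lies on a shortest $w-v$ path. Then the neighbour $u'$ of $u$ on that path toward $w$ satisfies $d(v,u')=d(v,u)+1>d(v,u)$, contradicting the fact that $u$ is maximally distant from $v$. By the symmetric argument the other case fails as well, so the only vertices that can strongly resolve an MMD pair are its two endpoints. Consequently any strong resolving set must meet every edge of $G_{SR}$, i.e.\ it is a vertex cover, and therefore $sdim(G)\ge\alpha(G_{SR})$.

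For the reverse inclusion I would show that any vertex cover of $G_{SR}$ strongly resolves every pair of vertices. The key claim is that for arbitrary distinct $u,v$ there exists an MMD pair $u^{*},v^{*}$ such that both $u$ and $v$ lie on a common shortest $u^{*}-v^{*}$ path. I would build $u^{*}$ by a greedy extension: starting from $u$, repeatedly step to a neighbour that is strictly farther from $v$ until no such neighbour remains; since distances are bounded by the diameter this terminates at a vertex $u^{*}$ that is maximally distant from $v$, and because each step keeps the previous vertex on a shortest path to $v$, we obtain $d(u^{*},v)=d(u^{*},u)+d(u,v)$. Extending analogously from $v$ away from $u^{*}$ yields $v^{*}$ maximally distant from $u^{*}$ with $d(u^{*},v^{*})=d(u^{*},v)+d(v,v^{*})$.

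The remaining point, which I expect to be the main obstacle, is to verify that $u^{*}$ is still maximally distant from $v^{*}$ after the second extension. Here I would combine the triangle inequality with the fact that $v$ lies on a shortest $u^{*}-v^{*}$ path: for any neighbour $x$ of $u^{*}$ one has $d(v^{*},x)\le d(v^{*},v)+d(v,x)\le d(v^{*},v)+d(v,u^{*})=d(v^{*},u^{*})$, where the middle inequality is precisely the maximal distance of $u^{*}$ from $v$. Hence $u^{*}$MMD$v^{*}$, and concatenating the shortest subpaths produces a shortest $u^{*}-v^{*}$ path passing through both $u$ and $v$, so each of $u^{*}$ and $v^{*}$ strongly resolves $u$ and $v$. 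Since a vertex cover of $G_{SR}$ must contain $u^{*}$ or $v^{*}$, it strongly resolves every pair, giving $sdim(G)\le\alpha(G_{SR})$; together with the first inclusion this proves $sdim(G)=\alpha(G_{SR})$.
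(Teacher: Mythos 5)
Your proof is correct. Note that the paper itself gives no proof of this statement: it is quoted verbatim from Oellermann and Peters-Fransen \cite{O1}, so there is no in-paper argument to compare against. What you have written is essentially the standard proof from that reference: the forward inequality via the observation that only the two endpoints of an MMD pair can strongly resolve that pair, and the reverse inequality via the greedy ``walk away'' construction producing, for any pair $u,v$, an MMD pair $u^{*},v^{*}$ with $u$ and $v$ on a common shortest $u^{*}$--$v^{*}$ path; your verification that $u^{*}$ remains maximally distant from $v^{*}$ after the second extension is exactly the delicate step and is handled correctly.
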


Let $\{G_i\}_{i=1}^m$ be a set of finite pairwise disjoint simple connected graphs. The chain graph 
\begin{equation*}
\mathcal{C}(G_1, G_2, \ldots ,G_m) = \mathcal{C}(G_1, G_2, . . . ,G_m; x_1, w_1, x_2, w_2, \ldots , x_m, w_m) \nonumber
\end{equation*}
of $\{G_i\}_{i=1}^m$  with respect to the vertices $\{x_i,w_i \in V(G_i) \mid i=1,2,\ldots,m\}$ is the graph obtained from the graphs $G_1, \ldots , G_m$ by identifying the vertex $w_i$ and the vertex $x_{i+1}$, as shown in Figure \ref{cycle42}, for all $i \in \{1, 2, \ldots , m-1\}$. For more results and detail about chain graph, see \cite{M1, N2}.
\begin{figure}[h!]\label{cycle42}
	\centering
	\includegraphics[width=12cm]{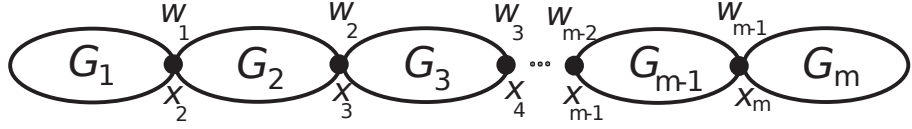}
	\caption{\small{A chain graph}}
\end{figure}
Let $\{C_{n_i}\}_{i=1}^m$ be a set of finite pairwise disjoint simple cycles. Let $V(C_{n_i})=\{v^{i}_{j}\mid j=1,2,\ldots,n_{i}\}$, where $i\in\{1,2,\dots,m\}$. Assume that $n_i$ is even for each $ i=1,2,\dots,m $. We consider a chain cycle of $\{C_{n_i}\}_{i=1}^m$ given by 
\begin{align}\nonumber
\begin{split}
&\mathcal{C}(C_{n_1}, C_{n_2}, \ldots ,C_{n_m})
\\
&= \mathcal{C}\left( C_{n_1},C_{n_2},\dots,C_{n_m};v^1_1,v_1^2,v_{\frac{n_{1}}{2}+1}^1, v_1^3, v_{\frac{n_{2}}{2}+1}^2, \ldots ,v_1^m,
v_{\frac{n_{m-1}}{2}+1}^{m-1},
v^m_{\frac{n_{m}}{2}+1}\right)
\end{split}
\end{align} 
with respect to the vertices $\{v^{i}_{\frac{n_{i}}{2}+1}, v^{i+1}_{1} \mid i=1,2,\dots,m-1 \}$. A chain cycle of $ \{C_8, C_{10}, C_8\} $ with respect to vertices $ \{v_5^1,v_1^2,v^2_{6} v_1^3\} $ is shown in Figure \ref{cycle4}.

Now, assume that $n_i$ is odd for each $i=1,2,\dots,m$. We consider a chain cycle of $\{C_{n_i}\}_{i=1}^m$ given by 
\begin{align}\nonumber
\begin{split}
&\mathcal{C}(C_{n_1}, C_{n_2}, \ldots ,C_{n_m}) 
\\
&= \mathcal{C}\left( C_{n_1}, C_{n_2}, \ldots ,C_{n_m};v^1_1, v_1^2, v_{\frac{n_{1}+1}{2}+1}^1, v_1^3,v_{\frac{n_{2}+1}{2}+1}^2,  \ldots ,v_1^m, v_{\frac{n_{m-1}+1}{2}+1}^{m-1},v^m_{\frac{n_{m}+1}{2}+1}\right)
\end{split}
\end{align} 
with respect to the vertices $\{v^{i}_{\frac{n_{i}+1}{2}+1}, v^{i+1}_{1} \mid i=1,2,\dots,m-1\}$. A chain cycle of $ \{C_5, C_{7}, C_5\} $ with respect to vertices $ \{v_4^1,v_1^2,v_5^2, v_1^3\} $ is shown in Figure \ref{cycle41}.

Through out the paper, we denote the vertex set and the edge set of chain cycle by $V(\mathcal{C})$ and $E(\mathcal{C})$ instead of $V(\mathcal{C}(C_{n_1}, C_{n_2}, \ldots ,C_{n_m}))$ and $E(\mathcal{C}(C_{n_1}, C_{n_2}, \ldots ,C_{n_m}))$, respectively.
\section{Partition dimension of chain cycles}
In this section, we find the partition dimension of chain cycle constructed by even cycles and chain cycle constructed by odd cycles. Following two results are important tools for proving our results.  
\begin{Thm}[Chartrand et al. \cite{C1}]\label{th1}
	If $G$ is a nontrivial connected graph, then $pd(G)\leq dim(G)+1$.
\end{Thm}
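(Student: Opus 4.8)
The plan is to convert a metric basis of $G$ directly into a resolving partition that uses exactly one extra class. Let $W=\{w_1,\ldots,w_k\}$ be a metric basis of $G$, so that $k=\dim(G)$ and the vectors $r_G(v|W)=(d(v,w_1),\ldots,d(v,w_k))$ are pairwise distinct over all $v\in V(G)$. First I would form the ordered partition
\begin{equation*}
\Pi=\{Q_1,\ldots,Q_k,Q_{k+1}\},\qquad Q_i=\{w_i\}\ (1\le i\le k),\quad Q_{k+1}=V(G)\setminus W .
\end{equation*}
Since $\dim(G)\le |V(G)|-1$ for every nontrivial connected graph (one may always resolve with $V(G)$ minus a single vertex), the class $Q_{k+1}$ is nonempty, so $\Pi$ is a genuine partition of $V(G)$ into $k+1$ cells.

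The key observation is that, because each $Q_i$ with $i\le k$ is a singleton, we have $d(v,Q_i)=d(v,w_i)$ for every vertex $v$; hence the first $k$ coordinates of $r(v|\Pi)$ coincide exactly with $r_G(v|W)$. I would then verify that $\Pi$ resolves every pair of distinct vertices $u,v$ by splitting into cases according to their cells. If both $u,v\in Q_{k+1}$, then since $W$ resolves $u$ and $v$ there is an index $i\le k$ with $d(u,w_i)\ne d(v,w_i)$, and this inequality is inherited by the $i$-th coordinate of the partition representations. If at least one of them, say $u$, equals some $w_j\in W$, then $d(u,Q_j)=0$ while $d(v,Q_j)=d(v,w_j)\ge 1$ (as $v\ne w_j$), so the $j$-th coordinates differ. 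In every case $r(u|\Pi)\ne r(v|\Pi)$, whence $\Pi$ is a resolving partition and $pd(G)\le k+1=\dim(G)+1$.

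The argument is essentially a bookkeeping exercise, so I do not anticipate a genuine obstacle; the only points that need care are making sure $\Pi$ is a legitimate partition and that the basis vertices neither collide with one another nor with the leftover class. This is precisely what the zero-entry pattern handles: a vertex lies in the singleton cell $Q_i$ if and only if its $i$-th coordinate is $0$, so distinct basis vertices are automatically separated from each other and from $Q_{k+1}$, while the nonemptiness of $Q_{k+1}$ is guaranteed by the bound $\dim(G)\le |V(G)|-1$.
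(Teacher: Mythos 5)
Your argument is correct: making each basis vertex its own singleton class forces $d(v,Q_i)=d(v,w_i)$, so the partition representation inherits the resolving property of $W$, and the zero-coordinate pattern separates the basis vertices from everything else; the nonemptiness of $Q_{k+1}$ is indeed guaranteed by $\dim(G)\le |V(G)|-1$. The paper states this theorem as an imported result from Chartrand et al.\ without reproducing a proof, and your construction is exactly the standard one from that source, so there is nothing to compare beyond noting that your write-up is complete and correct.
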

\begin{Prop}[Chartrand et al. \cite{C1}]\label{pro1}
	Let $G$ be a connected graph of order $n\geq2$. Then $pd(G)=2$ if and only if $G\cong P_n$.
\end{Prop}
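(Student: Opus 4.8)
The plan is to prove both implications, with all the substance lying in the converse direction. For the easy direction, suppose $G \cong P_n$ with vertices $v_1, \ldots, v_n$ listed along the path. I would exhibit the explicit resolving $2$-partition $\Pi = \{Q_1, Q_2\}$ with $Q_1 = \{v_1\}$ and $Q_2 = \{v_2, \ldots, v_n\}$: then $r(v_1 \mid \Pi) = (0,1)$ while $r(v_i \mid \Pi) = (i-1, 0)$ for $i \geq 2$, and these vectors are pairwise distinct. Since $n \geq 2$, the single-part partition $\{V(G)\}$ assigns every vertex the vector $(0)$ and cannot resolve, so $pd(G) \geq 2$; hence $pd(P_n) = 2$.

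For the converse, assume $\Pi = \{Q_1, Q_2\}$ is a resolving partition and set $p = |Q_1|$ and $q = |Q_2|$. First I would observe that $u \in Q_1$ exactly when its representation has the form $(0, d(u,Q_2))$ with $d(u,Q_2) \geq 1$, and symmetrically for $Q_2$. Consequently any two vertices from different parts are automatically resolved, and the resolving hypothesis reduces to the single requirement that $u \mapsto d(u, Q_2)$ is injective on $Q_1$ and $v \mapsto d(v, Q_1)$ is injective on $Q_2$.

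The key lemma is that these distance values are consecutive, i.e. $\{ d(u,Q_2) : u \in Q_1 \} = \{1, 2, \ldots, p\}$. To prove it I take $u \in Q_1$ with $d(u, Q_2) = k$ together with a shortest path $u = z_0, z_1, \ldots, z_k$ where $z_k \in Q_2$; a short argument shows each $z_i$ with $i < k$ lies in $Q_1$ and satisfies $d(z_i, Q_2) = k - i$, so all the values $1, \ldots, k$ already occur among $Q_1$. Taking $k$ maximal and invoking injectivity then forces the value set to be exactly $\{1, \ldots, p\}$, and symmetrically $\{ d(v,Q_1) : v \in Q_2\} = \{1, \ldots, q\}$. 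Writing $a_k$ and $b_k$ for the unique vertices of $Q_1$ and $Q_2$ at distance $k$ from the opposite part, I would next show that there is exactly one edge joining the two parts: any cross edge connects a vertex at distance $1$ from $Q_2$ to one at distance $1$ from $Q_1$, hence must be the edge $a_1 b_1$.

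Finally I would extract the path structure. Because $a_1 b_1$ is the only cut edge, every shortest path from $a_k$ to $Q_2$ runs through it, giving $d(a_k, Q_2) = d(a_k, a_1) + 1$ and therefore $d(a_k, a_1) = k - 1$; thus within $G[Q_1]$ the distances from $a_1$ to the $p$ vertices are precisely $0, 1, \ldots, p-1$, so $G[Q_1]$ is a connected graph on $p$ vertices of diameter $p-1$, which must be the path $P_p$ with $a_1$ as an endpoint. The identical argument applies to $G[Q_2]$, and splicing the two paths at the single edge $a_1 b_1$ yields $G \cong P_n$. I expect the main obstacle to be the consecutive-distance lemma together with this rigidity step: turning the purely numerical injectivity of distances into the combinatorial conclusion that each part induces a path and that the parts are joined by a lone edge is where the real content of the proof lies.
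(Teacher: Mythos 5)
Your proof is correct. Note that the paper itself gives no proof of this proposition --- it is quoted from Chartrand, Salehi and Zhang \cite{C1} purely as a tool for the lower bound $pd(\mathcal{C})\geq 3$ --- so there is nothing in the paper to compare against; your argument (cross-part vertices are automatically resolved, the distances to the opposite class form the consecutive set $\{1,\dots,|Q_i|\}$, there is a unique cross edge $a_1b_1$, and each class induces a path with the distinguished vertex as an endpoint) is essentially the classical argument from \cite{C1} and all of its steps check out. One cosmetic remark: you call $a_1b_1$ the ``only cut edge'' where you mean the only edge joining $Q_1$ to $Q_2$; it happens also to be a bridge, but the property you actually use is that every $Q_1$--$Q_2$ walk must traverse it.
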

In the following theorem, we compute the partition dimension of chain cycle constructed by even cycles.
\begin{Thm}\label{TT1}
	The partition dimension of chain cycle $\mathcal{C}(C_{n_1}, C_{n_2}, \ldots ,C_{n_m})= \mathcal{C}\Big( C_{n_1},$
	$ C_{n_2}, \ldots
	 ,C_{n_m};v^1_1,v_1^2,v_{\frac{n_{1}}{2}+1}^1, v_1^3, v_{\frac{n_{2}}{2}+1}^2, \ldots ,v_1^m, v_{\frac{n_{m-1}}{2}+1}^{m-1},v_{\frac{n_{m}}{2}+1}^{m}\Big)$ is $3$, where $n_i$ is even for each $i=1,2,\dots,m$.
\end{Thm}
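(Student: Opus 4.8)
The plan is to establish the two inequalities $pd(\mathcal{C})\ge 3$ and $pd(\mathcal{C})\le 3$ separately. The lower bound is immediate: the chain cycle contains the cycle $C_{n_1}$ as a subgraph, so it is not isomorphic to a path $P_n$, and since it is a nontrivial connected graph, Proposition \ref{pro1} gives $pd(\mathcal{C})\ge 3$. All the work therefore lies in producing a single resolving partition of cardinality $3$.

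For the upper bound I would write $u_0=v^1_1$ and $u_i=v^i_{\frac{n_i}{2}+1}=v^{i+1}_1$ for the successive cut vertices, and set $D_i=\sum_{j=1}^{i}\frac{n_j}{2}$, so that the shortest $u_0$–$u_i$ path runs straight along the ``backbone'' and has length $D_i$. I then propose the partition $\Pi=\{Q_1,Q_2,Q_3\}$ with $Q_2=\{v^1_1\}$ a single global anchor, $Q_3=\{\,v^i_j : 1\le i\le m,\ 2\le j\le \tfrac{n_i}{2}\,\}$ the interiors of one arc (the ``upper'' arc) of every cycle, and $Q_1=V(\mathcal{C})\setminus(Q_2\cup Q_3)$, which consists of the cut vertices $u_1,\dots,u_m$ together with the interiors $v^i_j$, $\frac{n_i}{2}+2\le j\le n_i$, of the opposite (``lower'') arcs. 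Since $Q_2$ is a singleton, the second coordinate of $r(v\mid\Pi)$ is exactly $d(v,v^1_1)$, the backbone distance from the start.

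Two easy observations do most of the bookkeeping. First, collisions across classes are automatic: a vertex of $Q_1$ has first coordinate $0$ and third coordinate $\ge 1$, a vertex of $Q_3$ has first coordinate $\ge 1$ and third coordinate $0$, and $v^1_1$ is the only vertex with second coordinate $0$; hence two vertices with equal representation must lie in the same class. Second, the reflection of $C_{n_i}$ about the axis through $u_{i-1}$ and $u_i$ sends each upper-arc interior $v^i_j$ to the lower-arc interior $v^i_{n_i+2-j}$, so every reflection-symmetric pair is split between $Q_3$ and $Q_1$; this is the structural reason the anchor distance will turn out to be injective on each class.

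It remains to show that $d(\cdot,v^1_1)$ separates the vertices within $Q_1$ and within $Q_3$, and this is the step I expect to be the real content. For $v^i_j\in Q_3$ the shortest path to $v^1_1$ leaves $C_{n_i}$ through $u_{i-1}$, giving $d(v^i_j,v^1_1)=D_{i-1}+(j-1)$ with $j-1$ ranging over $1,\dots,\tfrac{n_i}{2}-1$; these values strictly increase with $j$ inside each cycle and, because $D_{i-1}+\tfrac{n_i}{2}-1<D_{i}+1$, do not overlap between cycles, so the second coordinate alone is injective on $Q_3$. For $Q_1$ the analogous computation gives $d(u_i,v^1_1)=D_i$ for the cut vertices and $d(v^i_j,v^1_1)=D_{i-1}+(n_i-j+1)$ for the lower interiors, with $n_i-j+1$ running through $1,\dots,\tfrac{n_i}{2}-1$. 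The crux is then a counting/tiling argument: as $i$ and $j$ vary these $D_m=\sum_i \tfrac{n_i}{2}$ values are exactly the integers $1,2,\dots,D_m$, each attained once, so $d(\cdot,v^1_1)$ is injective on $Q_1$ as well. Combining the three observations, no two vertices share a representation, $\Pi$ is resolving, $pd(\mathcal{C})\le 3$, and equality follows. The main obstacle is organizing this last injectivity check cleanly across cycles of differing even lengths; once the backbone distances are written as above it reduces to the fact that consecutive cycles contribute consecutive, non-overlapping blocks of integers.
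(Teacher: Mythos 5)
Your proof is correct, and while it follows the same overall template as the paper's (lower bound from Proposition \ref{pro1} because the chain cycle is not a path, upper bound from an explicit resolving $3$-partition), the partition you construct and the way you certify it are genuinely different. The paper's classes are $Q_1=\{v^1_1,\dots,v^1_{n_1/2-1},v^1_{n_1/2+2},\dots,v^1_{n_1}\}$, a class $Q_2$ consisting of the segments $v^i_{n_i/2+3},\dots,v^i_{n_i}$ of the lower arcs of the intermediate cycles together with $v^m_{n_m}$, and $Q_3$ the remainder; resolvability is then checked by tabulating $r(v^i_j|\Pi)$ through six case distinctions and asserting the vectors are pairwise distinct. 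Your singleton anchor $Q_2=\{v^1_1\}$ buys two simplifications: cross-class collisions are excluded purely by the pattern of zero entries, and the within-class check collapses to the injectivity of the single function $d(\cdot,v^1_1)$ on $Q_1$ and on $Q_3$, which you establish by showing the attained values fill consecutive, non-overlapping blocks of $\{1,\dots,\sum_i n_i/2\}$ (the distances $D_{i-1}+(j-1)$ on the upper arcs, $D_{i-1}+(n_i-j+1)$ on the lower arcs, and $D_i$ at the cut vertices, each hit exactly once per class). This is a shorter and more transparent argument than the paper's, whose ``easily seen'' distinctness claim is the only real content of its verification and is left to the reader; your version makes that step explicit and essentially one line. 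Both proofs are complete, and nothing in yours needs repair.
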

\begin{proof}
	
	Let $\Pi=\{Q_{1},Q_{2},Q_{3}\}$, where $Q_{1}=\{v^{1}_{1},\ldots,v^{1}_{\frac{n_1}{2}-1},v^{1}_{\frac{n_1}{2}+2},\ldots,v^1_{n_1}\}$, $Q_{2}=\{v^{2}_{\frac{n_{2}}{2}+3},v^{2}_{\frac{n_{2}}{2}+4},\ldots,$ $v^{2}_{n_{2}},v^{3}_{\frac{n_{3}}{2}+3},v^{3}_{\frac{n_{3}}{2}+4}
	,\ldots,$
	$v^{3}_{n_{3}},\ldots, v^{m-1}_{\frac{n_{m-1}}{2}+3},v^{m-1}_{\frac{n_{m-1}}{2}+4},\ldots,
	v^{m-1}_{n_{m-1}}\}\cup\{v^{m}_{n_{m}}\}$ and  $Q_{3}=V(\mathcal{C})\setminus \{Q_{1} \cup Q_{2}\}$ be a partition of $V(\mathcal{C})$.
	We show that $\Pi$ is a resolving partition of $V(\mathcal{C})$ with minimum cardinality.
	The representation of each vertex of $V(\mathcal{C})$ with respect to $\Pi$ is given as:
	\begin{eqnarray*}
		r(v^{1}_{\frac{n_1}{2}}|\Pi)=(1,2,0), & r(v^{1}_{\frac{n_1}{2}+1}|\Pi)=(1,1,0),& r(v_{n_m}^m|\Pi)=\left( \sum\limits_{k=2}^{m} \frac{n_{k}}{2}+2,0,1\right) .
	\end{eqnarray*}
	\begin{equation*}
	r(v^{1}_{j}|\Pi)=\left\{
	\begin{array}{l l}
	(0,{n_1}-j-1, {n_1}-j-3)    & \mbox{if $1\leq j\leq \frac{n_{1}}{2}-1$} \\
	(0,j-\frac{n_1}{2},j-\frac{n_1}{2}-1)     & \mbox{if $\frac{n_{1}}{2} +2\leq j\leq n_{1}$},
	\end{array}\right.
	\end{equation*}
	
	\begin{equation*}
	r(v^{i}_{j}|\Pi)=\left\{
	\begin{array}{l l}
	(j,j,0)                                                              & \mbox{if $1\leq j\leq \lceil \frac{n_{2}}{4}\rceil$} \\
	(j,\frac{n_{2}}{2} -j+2,0)                                           & \mbox{if $\lceil \frac{n_{2}}{4}\rceil+1 \leq j\leq \frac{n_{2}}{2}$},
	\end{array}\right.
	\end{equation*}
	\begin{equation*}
	r(v^{i}_{j}|\Pi)=\left\{
	\begin{array}{l l}
	\left( \sum\limits_{k=3}^{m} \frac{n_{k}}{2}+j,j,0 \right)                        & \mbox{if $1\leq j\leq \lceil \frac{n_{i}}{4}\rceil, 3\leq i \leq m$} \\
	\left(\sum\limits_{k=3}^{m}\frac{n_{k}}{2}+j,\frac{n_{i}}{2}-j+2,0 \right)       & \mbox{if $\lceil \frac{n_{i}}{4}\rceil+1 \leq j\leq \frac{n_{i}}{2}, 3\leq i \leq m$},
	\end{array}\right.
	\end{equation*}
	
	\begin{equation*}
	r(v^{i}_{\frac{n_{i}}{2}+j}|\Pi)=\left\{
	\begin{array}{l l}
	\left(\sum\limits_{k=2}^{m-1} \frac{n_{k}}{2},1,0 \right)                     & \mbox{if $2\leq i\leq m-1, j=2$} \\
	\left(\sum\limits_{k=2}^{m-1} \frac{n_{k}}{2}+n_{m}-j+2,n_{i}-j,0 \right)     & \mbox{if $i=m, \frac{n_{i}}{2}+1\leq j\leq n_i -1$},
	\end{array}\right.
	\end{equation*}
	
	\begin{equation*}
	r(v^{i}_{j}|\Pi)=\left\{
	\begin{array}{l l}
	(n_{i}+2-j,0,j-\frac{n_{i}}{2}-2)                                            & \mbox{if $\frac{n_{2}}{2}+3\leq j\leq \lceil \frac{3n_{2}}{4}\rceil+1$} \\
	(n_{i}+2-j,0,n_{i}+1-j)                                                      & \mbox{if $\lceil \frac{3n_{2}}{4}\rceil+2 \leq j\leq n_{2}$},
	\end{array}\right.
	\end{equation*}
	\begin{equation*}
	r(v^{i}_{j}|\Pi)=\left\{
	\begin{array}{l l}
	\left(\sum\limits_{k=3}^{m} \frac{n_{k}}{2}+n_{i}-j,0,j-\frac{n_{i}}{2} -2 \right)       & \mbox{if $\frac{n_{i}}{2}+3\leq j\leq \lceil \frac{3n_{i}}{4}\rceil+1, 3\leq i \leq m $} \\
	\left(\sum\limits_{k=3}^{m} \frac{n_{k}}{2}+n_{i}-j,0,n_{i}+1-j \right)                  & \mbox{if $\lceil \frac{3n_{i}}{4}\rceil+2 \leq j\leq n_{i}, 3\leq i \leq m$}.
	\end{array}\right.
	\end{equation*}
It is easily seen that the representation of each vertex with respect to $\Pi$ is distinct.
This shows that $\Pi$ is a resolving partition of $\mathcal{C} (C_{n_1}, C_{n_2}, \ldots ,C_{n_m})$. Thus $pd(\mathcal{C}(C_{n_1}, C_{n_2}, \ldots ,C_{n_m}))\leq 3$.
	
On the other hand, by Proposition \ref{pro1} it follows that $pd(\mathcal{C}( C_{n_1}, C_{n_2}, \ldots ,C_{n_m}))\geq3$. Hence $\emph{pd}(\mathcal{C}( C_{n_1}, C_{n_2}, \ldots ,C_{n_m}))=3$.
\end{proof}
In the following example, we find the partition dimension of a chain cycle constructed by $C_{8}$, $C_{10}$ and $C_{8}$.
\begin{Ex}\label{e1}
Let $m=3$ and $n_{1}=8$, $n_{2}=10$ and $n_{3}=8$. The chain cycle constructed by $C_8$, $C_{10}$ and $C_8$ with respect to the vertices $ \{v_5^1,v_1^2,v^2_{6}, v_1^3\}$ is denoted by $\mathcal{C}(C_{8}, C_{10},C_{8})=\mathcal{C}( C_{8}, C_{10}, C_{8}; v_1^1, v^{2}_{1}, v^{1}_{5}, v^{3}_{1},$ $ v^{2}_{6}, v_5^3) $ and is given in Figure \ref{cycle4}.
	\begin{figure}[h!]
		\centering
		\includegraphics[width=10cm]{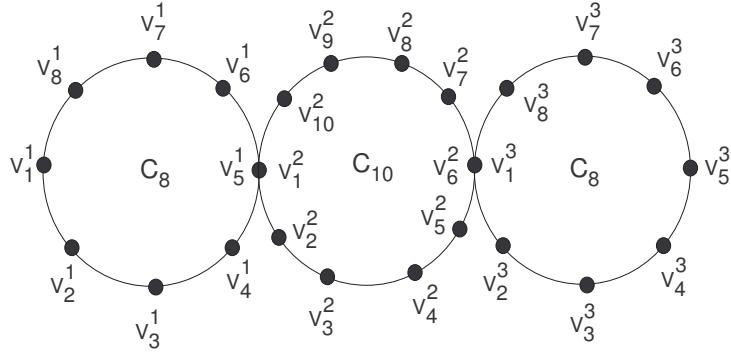}
		\caption{\small{Chain cycle of $C_8$, $C_{10}$ and $C_8$}}\label{cycle4}
	\end{figure}
	
	Using Theorem \ref{TT1}, we construct a resolving partition of $\mathcal{C}(C_{8}, C_{10},C_{8})$ as  $\Pi=\{Q_{1},Q_{2},Q_{3}\}$, where $Q_{1}=\{v^{1}_{1},v^{1}_{2}, v^{1}_{3}, v^{1}_{6}, v^{1}_{7}, v^{1}_{8}\}$, $Q_{2}=\{v^{2}_{8},v^{2}_{9}, v^{2}_{10}, v^{3}_{8}\}$ and $Q_{3}=\{v^{1}_{4},v^{1}_{5}, v^{2}_{2}, v_{3}^{2}, v_{4}^{2}, v_{5}^{2}, v_{6}^{2}, v_{7}^{2}, v_{2}^{3}, v_3^3,$ 
	$ v_4^3, v_4^3, v_5^3, v_6^3, v_7^3\}$. Again by Theorem \ref{TT1}, we note that each vertex of $\mathcal{C}(C_{8}, C_{10},C_{8})$ has distinct representation with respect to $\Pi$, as shown in Table $1$. Hence $pd(\mathcal{C}(C_{8}, C_{10},C_{8}))$ 
	$=3$.
	
	\begin{table}
		\centering
		
		\begin{tabular}{|c|c|c|}
			\hline
			$r(v_1^1|\Pi)=(0,5,3) $ & $r(v_2^2|\Pi)=(2,2,0)$ & $r(v_{10}^2|\Pi)=(2,0,1)$ \\ \hline
			$r(v_2^1|\Pi)=(0,4,2) $ & $r(v_3^2|\Pi)=(3,3,0)$ & $r(v_2^3|\Pi)=(7,2,0)$ \\ \hline
			$r(v_3^1|\Pi)=(0,3,1) $ & $r(v_4^2|\Pi)=(4,3,0)$ & $r(v_3^3|\Pi)=(8,3,0)$ \\ \hline
			$r(v_4^1|\Pi)=(1,2,0) $ & $r(v_5^2|\Pi)=(5,2,0)$ & $r(v_4^3|\Pi)=(9,4,0)$ \\ \hline
			$r(v_5^1|\Pi)=(1,1,0) $ & $r(v_6^2|\Pi)=(6,1,0)$ & $r(v_5^3|\Pi)=(10,3,0)$ \\ \hline
			$r(v_6^1|\Pi)=(0,2,1) $ & $r(v_7^2|\Pi)=(5,1,0)$ & $r(v_6^3|\Pi)=(9,2,0)$ \\ \hline
			$r(v_7^1|\Pi)=(0,3,2) $ & $r(v_8^2|\Pi)=(4,0,1)$ & $r(v_7^3|\Pi)=(8,1,0)$ \\ \hline
			$r(v_8^1|\Pi)=(0,4,3) $ & $r(v_9^2|\Pi)=(3,0,2)$ & $r(v_8^3|\Pi)=(7,0,1)$ \\
			\hline
		\end{tabular}\label{SVLP}
		\caption{Representation of $v^{i}_{j}$ with respect to $\Pi$} 
	\end{table}
\end{Ex}
\begin{Thm}\label{TT2}
	The partition dimension of chain cycle $\mathcal{C}(C_{n_1}, C_{n_2},$ 
	$ \ldots ,C_{n_m})= \mathcal{C}\Big(C_{n_1},$
	$ C_{n_2}, \ldots , C_{n_m};v^1_1, v_1^2, v_{\frac{n_{1}+1}{2}+1}^1, v_1^3,v_{\frac{n_{2}+1}{2}+1}^2,  \ldots ,v_1^m, v_{\frac{n_{m-1}+1}{2}+1}^{m-1},v_{\frac{n_{m}+1}{2}+1}^{m}\Big)$ is $3$, where $n_i$ is odd for each $i=1,2,\dots,m$.
\end{Thm}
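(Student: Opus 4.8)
The plan is to mirror the proof of Theorem \ref{TT1}. First I would exhibit an explicit ordered $3$-partition $\Pi = \{Q_1, Q_2, Q_3\}$ of $V(\mathcal{C})$, designed so that (i) $Q_1$ consists of most of the vertices of the first cycle $C_{n_1}$, with the two vertices flanking the identification vertex $v^1_{\frac{n_1+1}{2}+1}$ omitted, (ii) $Q_2$ collects the ``far'' arcs of the remaining cycles $C_{n_2}, \ldots, C_{n_m}$ together with the terminal vertex $v^m_{n_m}$, and (iii) $Q_3 = V(\mathcal{C}) \setminus (Q_1 \cup Q_2)$. The design principle, exactly as in the even case, is that the first coordinate $d(v,Q_1)$ grows monotonically as $v$ moves through successive cycles of the chain, so it separates vertices lying in different cycles, while the second and third coordinates $d(v,Q_2)$ and $d(v,Q_3)$ separate vertices lying within a common cycle.

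Second, I would compute $r(v\mid\Pi)$ for every vertex, organized by which cycle $C_{n_i}$ contains $v$ and by whether $v$ lies on the near or the far semicircular arc of that cycle. Because the $n_i$ are now odd, the two arcs of each $C_{n_i}$ have lengths $\frac{n_i-1}{2}$ and $\frac{n_i+1}{2}$ rather than equal halves, so the breakpoints of the piecewise formulas (the analogues of $\lceil n_i/4\rceil$ and $\lceil 3n_i/4\rceil+1$ appearing in Theorem \ref{TT1}) must be recomputed with $\frac{n_i+1}{2}$ in place of $\frac{n_i}{2}$; the distance along each arc is still governed by shortest paths around the cycle. I expect this step to be routine but bookkeeping-heavy.

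The decisive step, and the main obstacle, is verifying that the resulting representations are pairwise distinct. I would argue in two layers. Across cycles, two vertices in distinct $C_{n_i}$ and $C_{n_{i'}}$ are separated because their $d(\cdot, Q_1)$ values fall in disjoint ranges controlled by the cumulative sums $\sum_{k}\frac{n_k+1}{2}$. Within a single cycle, I would check that $(d(v,Q_2), d(v,Q_3))$ is injective on each arc and that the two arcs do not collide. The subtle points absent in the even case are the shared identification vertices, where each $v^i_{\frac{n_i+1}{2}+1}=v^{i+1}_1$ must be counted once and must not duplicate a neighbor's representation, and the genuinely odd geometry, where a vertex can have two shortest paths to $Q_2$ or to $Q_3$; I would confirm that the $\min$ in the definition of $d(v,Q_j)$ yields consistent values so that no two vertices coincide.

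Finally, for the lower bound I would invoke Proposition \ref{pro1}: since $\mathcal{C}(C_{n_1}, \ldots, C_{n_m})$ contains cycles it is not a path $P_n$, hence $pd(\mathcal{C}) \geq 3$. Combined with the upper bound $pd(\mathcal{C}) \leq 3$ furnished by the explicit resolving partition, this yields $pd(\mathcal{C}(C_{n_1}, \ldots, C_{n_m})) = 3$.
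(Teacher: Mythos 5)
Your proposal matches the paper's own proof essentially step for step: the paper constructs exactly the partition you describe ($Q_1$ the first cycle minus the two vertices $v^1_{\lceil n_1/2\rceil}, v^1_{\lceil n_1/2\rceil+1}$ around the identification vertex, $Q_2$ the far arcs of $C_{n_2},\dots,C_{n_{m-1}}$ together with $v^m_{n_m}$, $Q_3$ the complement), lists the same piecewise representation formulas with the breakpoints shifted by ceilings and floors of $n_i/2$, asserts distinctness, and obtains the lower bound from Proposition \ref{pro1}. The only difference is that you explicitly flag the verification at the identification vertices and the two-shortest-path issue in odd cycles, which the paper dispatches with ``it is easily seen''.
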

\begin{proof}
Let $\Pi=\{Q_{1},Q_{2},Q_{3}\}$, where $Q_{1}=\{v^{1}_{1},\ldots,v^{1}_{\lceil\frac{n_1}{2}\rceil-1},v^{1}_{\lceil\frac{n_1}{2}\rceil+2},\ldots,v^1_{n_1}\}$, $Q_{2}=\{v^{2}_{\lceil\frac{n_{2}}{2}\rceil+3},$ $v^{2}_{\lceil\frac{n_{2}}{2}\rceil+4},\ldots,v^{2}_{n_{2}},$ $v^{3}_{\lceil\frac{n_{3}}{2}\rceil+3},v^{3}_{\lceil\frac{n_{3}}{2}\rceil+4},\ldots,v^{3}_{n_{3}}, \ldots, v^{m-1}_{\lceil\frac{n_{m-1}}{2}\rceil+3},v^{m-1}_{\lceil\frac{n_{m-1}}{2}\rceil+4},\ldots,v^{m-1}_{n_{m-1}}\}$
$\cup \{v^{m}_{n_{m}}\}$ and $Q_{3}=V(\mathcal{C})\setminus \{Q_{1} \cup Q_{2}\}$ be a partition of $V(\mathcal{C})$.
We show that $\Pi$ is a resolving partition of $V(\mathcal{C})$ with minimum cardinality.
The representation of each vertex of $V(\mathcal{C})$ with respect to $\Pi$ is given as:
\begin{eqnarray*}
	r(v^{1}_{\lceil \frac{n_1}{2} \rceil}|\Pi)=(1,2,0), & r(v^{1}_{\lceil \frac{n_1}{2} \rceil+1}|\Pi)=(1,1,0),& r(v_{n_m}^m|\Pi)=\left( \sum\limits_{k=2}^{m} \lfloor\frac{n_{k}}{2}\rfloor+2,0,1\right).
\end{eqnarray*}

\begin{equation*}
r(v^{1}_{j}|\Pi)=\left\{
\begin{array}{l l}
(0,{n_1}-\lfloor\frac{n_1}{2}\rfloor, {n_1}-\lfloor\frac{n_1}{2}\rfloor-1) & \mbox{if j=1 } \\
(0,{n_1}-j-1, {n_1}-j-3)      & \mbox{if $2\leq j\leq \lceil\frac{n_{1}}{2}\rceil-1$ } \\
(0,j-\lceil\frac{n_1}{2}\rceil,j-\lceil\frac{n_1}{2}\rceil-1) & \mbox{if $\lceil\frac{n_{1}}{2}\rceil +2\leq j\leq n_{1}$},
\end{array}\right.
\end{equation*}

\begin{equation*}
r(v^{i}_{j}|\Pi)=\left\{
\begin{array}{l l}
(j,j,0)                                                            & \mbox{if $1\leq j\leq \lceil \frac{n_{2}}{4}\rceil+1$} \\
(j,\lceil\frac{n_{2}}{2}\rceil-j+3,0)                              & \mbox{if $\lceil \frac{n_{2}}{4}\rceil+2 \leq j\leq \lceil\frac{n_{2}}{2}\rceil+1$},
\end{array}\right.
\end{equation*}
\begin{equation*}
r(v^{i}_{j}|\Pi)=\left\{
\begin{array}{l l}
\left(\sum\limits_{k=3}^{m} \lfloor\frac{n_{k}}{2}\rfloor+j ,j,0 \right)       & \mbox{if $1\leq j\leq \lceil \frac{n_{i}}{4}\rceil+1, 3\leq i \leq m $} \\
\left(\sum\limits_{k=3}^{m} \lfloor\frac{n_{k}}{2}\rfloor+j,\lceil\frac{n_{i}}{2}\rceil-j+3,0 \right)       & \mbox{if $\lceil \frac{n_{i}}{4}\rceil+2 \leq j\leq \lceil\frac{n_{i}}{2}\rceil+1, 3\leq i \leq m$},
\end{array}\right.
\end{equation*}

\begin{equation*}
r(v^{i}_{\frac{n_{i}}{2}+j}|\Pi)=\left\{
\begin{array}{l l}
\left(\sum\limits_{k=2}^{m-1} \lfloor\frac{n_{k}}{2}\rfloor,1,0 \right)                     & \mbox{if $2\leq i\leq m-1, j=2$} \\
\left(\sum\limits_{k=2}^{m-1} \lfloor\frac{n_{k}}{2}\rfloor+n_{m}-j+2,n_{i}-j,0 \right)     & \mbox{if $i=m,\lceil\frac{n_{i}}{2}\rceil+2\leq j\leq n_i-1$},
\end{array}\right.
\end{equation*}

\begin{equation*}
r(v^{i}_{j}|\Pi)=\left\{
\begin{array}{l l}
(n_{i}+2-j,0,j-\lceil\frac{n_{i}}{2}\rceil -2)                            & \mbox{if $\lceil\frac{n_{2}}{2}\rceil+3\leq j\leq \lceil \frac{3n_{2}}{4}\rceil+1$} \\
(n_{i}+2-j,0,n_{i}+1-j)                                                   & \mbox{if $\lceil \frac{3n_{2}}{4}\rceil+2 \leq j\leq n_{2}$},
\end{array}\right.
\end{equation*}
\begin{equation*}
r(v^{i}_{j}|\Pi)=\left\{
\begin{array}{l l}
\left(\sum\limits_{k=3}^{m} \lceil\frac{n_{k}}{2}\rceil+n_{i}-j,0,j-\lceil\frac{n_{i}}{2}\rceil-2 \right)       & \mbox{if $\lceil\frac{n_{i}}{2}\rceil+3\leq j\leq \lceil \frac{3n_{i}}{4}\rceil+1, 3\leq i \leq m $} \\
\left(\sum\limits_{k=3}^{m} \lceil\frac{n_{k}}{2}\rceil+n_{i}-j,0,n_{i}+1-j \right)       & \mbox{if $\lceil \frac{3n_{i}}{4}\rceil+2 \leq j\leq n_{i}, 3\leq i \leq m$}.
\end{array}\right.
\end{equation*}
It is easily seen that the representation of each vertex with respect to $\Pi$ is distinct.
This shows that $\Pi$ is a resolving partition of $\mathcal{C}(C_{n_1}, C_{n_2}, \ldots ,C_{n_m})$. Thus $\emph{pd}(\mathcal{C}(C_{n_1}, C_{n_2}, \ldots ,C_{n_m}))\leq 3$.

On the other hand, by Proposition \ref{pro1} it follows that $\emph{pd}(\mathcal{C}(C_{n_1}, C_{n_2}, \ldots ,C_{n_m}))\geq3$. Hence $\emph{pd}(\mathcal{C}(C_{n_1}, C_{n_2}, \ldots ,C_{n_m}))=3$.
\end{proof}
In the following example, we compute the partition dimension of the chain cycle constructed by $C_{5}$, $C_{7}$ and $C_{5}$.
\begin{Ex}\label{e2}
Let $m=3$ and $n_{1}=5$, $n_{2}=7$ and $n_{3}=5$. The chain cycle constructed by $C_5$, $C_{7}$ and $C_5$ with respect to vertices $ \{v_4^1,v_1^2,v_5^2, v_1^3\} $ is denoted by $\mathcal{C}(C_{5}, C_{7},C_{5})=\mathcal{C}(C_{5}, C_{7},C_{5};v^1_1,v^{2}_{1},v^{1}_{4},$
$v^{3}_{1},v^{2}_{5},v^3_4) $ and is given in Figure \ref{cycle41}.
\begin{figure}[h!]
	\centering
	\includegraphics[width=10cm]{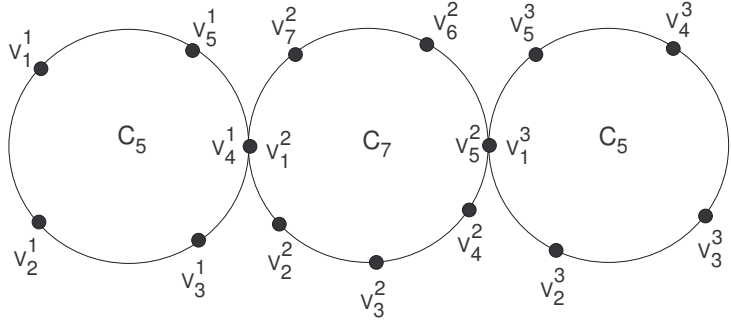}
	\caption{\small{Chain cycle of $C_{5}$, $C_{7}$ and $C_{5}$}}\label{cycle41}
\end{figure}

Using Theorem \ref{TT2}, we construct a resolving partition of $\mathcal{C}(C_{5}, C_{7},C_{5})$ as $\Pi=\{Q_{1},Q_{2},Q_{3}\}$, where $Q_{1}=\{v^{1}_{1}, v^{1}_{2}, v^{1}_{5}\}$, $Q_{2}=\{v^{2}_{7},v^{3}_{5}\}$ and $Q_{3}=\{v^{1}_{3},v^{1}_{4}, v^{2}_{2}, v_{3}^{2}, v_{4}^{2}, v^{2}_{5}, v_{6}^{2},$ $ v_2^3, v_3^3, v_4^3\}$. Again by Theorem \ref{TT2}, we note that each vertex of $\mathcal{C}(C_{5}, C_{7},C_{5})$ has distinct representation with respect to $\Pi$, as shown in Table $2$. Hence $pd(\mathcal{C}(C_{5}, C_{7},C_{5}))$ 
$=3$.

\begin{table}
	\centering
	
	\begin{tabular}{|c|c|}
		\hline
		$r(v_1^1|\Pi)=(0,3,2) $&$r(v_5^2|\Pi)=(4,1,0)$  \\ \hline
		$r(v_2^1|\Pi)=(0,3,1) $&$r(v_6^2|\Pi)=(3,1,0)$  \\ \hline
		$r(v_3^1|\Pi)=(1,2,0) $&$r(v_7^2|\Pi)=(2,0,1)$  \\ \hline
		$r(v_4^1|\Pi)=(1,1,0) $&$r(v_2^3|\Pi)=(5,2,0)$  \\ \hline
		$r(v_5^1|\Pi)=(0,2,1) $&$r(v_3^3|\Pi)=(6,2,0)$  \\ \hline
		$r(v_2^2|\Pi)=(2,2,0) $&$r(v_4^3|\Pi)=(6,1,0)$  \\ \hline
		$r(v_3^2|\Pi)=(3,3,0) $&$r(v_5^3|\Pi)=(5,0,1)$  \\ \hline
		$r(v_4^2|\Pi)=(4,2,0) $&  \\
		\hline
	\end{tabular}\label{SVLP1}
	\caption{Representation of $v^{i}_{j}$ with respect to $\Pi$} 
\end{table}
\end{Ex}
\section{Strong metric dimension of chain cycle}
In this section, we find the strong metric dimension of the chain cycle constructed by even cycles and the chain cycle constructed by odd cycles. 
Let $V_{1}=\{v^2_{1},v^{3}_{1},\dots, v^m_{1}\}$ and $V_2=V(\mathcal{C})\setminus V_{1}$.
Through out the section, we denote the strong resolving graph of a chain cycle $\mathcal{C}(C_{n_1},C_{n_2},\dots,C_{n_m})$ by $\mathcal{C}_{SR}(C_{n_1},C_{n_2},$ 
$\dots,C_{n_m})$. Furthermore, we denote the vertex set and the edge set of the strong resolving graph of  $\mathcal{C}(C_{n_1},C_{n_2},\dots,C_{n_m})$ by $V(\mathcal{C}_{SR})$ and $E(\mathcal{C}_{SR})$, respectively.  

Following two lemmas are easy observations from the structure of a cycle $C_{n}$ and a chain cycle constructed by even cycles as well as a chain cycle constructed by even cycles, respectively.  
\begin{Lem}\label{L9}
	Let $C_n$ be a cycle. Then for two distinct vertices $u_{i},u_{j}\in V(C_{n})$ we have $u_i$MMD$u_j$ if and only if $d(u_i, u_j)=d(C_{n})$.
\end{Lem}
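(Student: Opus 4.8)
The plan is to prove the two implications separately, observing at the outset that the ``only if'' direction is the one carrying the combinatorial content: the ``if'' direction is a soft fact valid in any graph. First I would record that the diameter of $C_n$ equals $\lfloor n/2\rfloor$ and that, by definition of the diameter, this is the largest distance occurring between any two vertices of $C_n$. It is convenient to label the vertices cyclically as $u_0,u_1,\ldots,u_{n-1}$, so that $d(u_i,u_j)=\min\{|i-j|,\,n-|i-j|\}$ and each $u_i$ has exactly the two neighbours $u_{i-1}$ and $u_{i+1}$, indices taken modulo $n$.

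For the implication ($\Leftarrow$), suppose $d(u_i,u_j)=d(C_n)$. Then for every neighbour $w$ of $u_i$ we have $d(u_j,w)\leq d(C_n)=d(u_i,u_j)$, simply because no distance in $C_n$ can exceed the diameter; hence $u_i$MD$u_j$. By the symmetric argument $u_j$MD$u_i$, so $u_i$MMD$u_j$. Note this half uses nothing special about cycles, only that $d(u_i,u_j)$ attains the diameter.

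For the converse ($\Rightarrow$) I would argue by contraposition. Assume $d(u_i,u_j)=k<\lfloor n/2\rfloor$, and without loss of generality take $u_i=u_0$ and $u_j=u_k$ with $k\geq 1$ measured along the shorter arc. I then examine the neighbour of $u_i$ on the opposite side, namely $u_{n-1}$, and compute $d(u_{n-1},u_j)=\min\{k+1,\,n-k-1\}$. Because $k<\lfloor n/2\rfloor$ forces $k+1\leq n-k-1$, this distance equals $k+1>k=d(u_i,u_j)$. Thus $u_{n-1}$ is a neighbour of $u_i$ that is strictly farther from $u_j$ than $u_i$ itself, so $u_i$ is \emph{not} maximally distant from $u_j$, and consequently $u_i$MMD$u_j$ fails.

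The only point demanding care -- the main, and rather minor, obstacle -- is verifying that $k+1\leq n-k-1$ genuinely holds under the hypothesis $k<\lfloor n/2\rfloor$, so that the wrap-around in the distance formula does not pull $d(u_{n-1},u_j)$ below $k+1$. This reduces in both parities of $n$ to the inequality $2k\leq n-2$, which is immediate from $k\leq\lfloor n/2\rfloor-1$ (for even $n$ this gives $2k\leq n-2$ directly, and for odd $n$ it gives $2k\leq n-3\leq n-2$). Assembling the two implications yields the stated equivalence.
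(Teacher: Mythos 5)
Your proof is correct and complete; note that the paper itself states Lemma \ref{L9} without proof, dismissing it as an easy observation from the structure of a cycle. Your argument is the natural one and matches the idea the paper uses implicitly in the converse part of Theorem \ref{T1}(a): the backward implication is the general fact that any pair of vertices realizing the diameter is mutually maximally distant, and for the forward implication you exhibit the neighbour $u_{n-1}$ of $u_i$ on the far side of the shorter arc and verify $k+1\le n-k-1$, which is exactly the wrap-around detail the paper glosses over.
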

\begin{Lem}\label{L2}
	Let $x\in V_1$ and $y\in V(\mathcal{C})$. Then $x$ and $y$ are not mutually maximally distant.  
\end{Lem}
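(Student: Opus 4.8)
The plan is to exploit the fact that every vertex of $V_1$ is a cut vertex of $\mathcal{C}$. By construction, each $x\in V_1$ has the form $x=v^{i+1}_1$ for some $i\in\{1,\dots,m-1\}$, and $x$ is the vertex obtained by identifying $v^{i+1}_1$ with $v^{i}_{\frac{n_i}{2}+1}$ in the even case (respectively with $v^{i}_{\frac{n_i+1}{2}+1}$ in the odd case). Thus $x$ is precisely the single vertex shared by the two consecutive cycles $C_{n_i}$ and $C_{n_{i+1}}$.

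First I would show that deleting $x$ splits $\mathcal{C}$ into two nonempty parts: a left part $L$ containing (the surviving vertices of) $C_{n_1},\dots,C_{n_i}$ and a right part $R$ containing (the surviving vertices of) $C_{n_{i+1}},\dots,C_{n_m}$, with no edges between $L$ and $R$ in $\mathcal{C}-x$. At the same time, $x$ has two neighbors lying in $L$ (its neighbors inside $C_{n_i}$, namely $v^{i}_{\frac{n_i}{2}}$ and $v^{i}_{\frac{n_i}{2}+2}$) and two neighbors lying in $R$ (its neighbors inside $C_{n_{i+1}}$, namely $v^{i+1}_2$ and $v^{i+1}_{n_{i+1}}$); in particular $x$ has a neighbor in each part.

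Next, given any vertex $y\in V(\mathcal{C})$ with $y\neq x$, it belongs to exactly one of $L$ or $R$; assume $y\in L$, the other case being symmetric. Choose a neighbor $w$ of $x$ that lies in $R$. Since $x$ is a cut vertex separating $L$ from $R$, every $y$--$w$ path must pass through $x$, whence $d(y,w)=d(y,x)+d(x,w)=d(y,x)+1$. Therefore $d(y,w)>d(y,x)$ for this neighbor $w\in N(x)$, which violates the defining condition for $x$ to be maximally distant from $y$ (that $d(y,w')\leq d(y,x)$ for every $w'\in N(x)$). Hence $x$ is not maximally distant from $y$, and so $x$ and $y$ cannot be mutually maximally distant.

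I expect essentially no serious obstacle; the only point needing care is the verification that removing $x$ actually separates $\mathcal{C}$ into two nonempty components, each containing a neighbor of $x$. This is immediate from the chain construction, in which every element of $V_1$ is an internal junction joining two complete cycles (never an ordinary degree-two vertex of a single cycle). Since the argument uses only this cut-vertex structure, which is identical in the even and odd constructions, the same reasoning proves the statement for both families simultaneously.
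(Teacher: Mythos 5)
Your argument is correct: every vertex of $V_1$ is a cut vertex with a neighbor on each side, so it always has a neighbor strictly farther from any other vertex $y$ and hence is never maximally distant from $y$. The paper offers no proof at all for this lemma (it is dismissed as an ``easy observation from the structure''), and the cut-vertex reasoning you spell out is exactly the structural fact being invoked, valid uniformly in the even and odd cases; the only cosmetic omission is the trivial case $y=x$, which fails MD vacuously since $d(x,x)=0<1=d(x,w)$ for any neighbor $w$.
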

In the next theorem, we find the mutually maximally distant vertices in chain cycle  $\mathcal{C}(C_{n_1},C_{n_2},$ 
$\dots,C_{n_m})$, with each $n_i$ is even, with respect to the vertices $\{v^{i}_{\frac{n_{i}}{2}+1},v^{i+1}_1 \mid i=1,2,\dots,m-1 \}$.  Here, we denote $U_{1}(C_{n_i})=\{v^{i}_{1},v^{i}_{2},\dots,v^{i}_{\frac{n_{i}}{2}}\}$ and $U_{2}(C_{n_i})=\{v^{i}_{\frac{n_{i}}{2}+1},v^{i}_{\frac{n_{i}}{2}+2},$ $\dots,v^i_{n_i}\}, i\in\{1,2,\dots,m\}$. 
\begin{Thm}\label{T1}	
	Let $v^i_j,v^k_l\in V_2$, where $i,k\in \{1,2,\dots,m\}$, in a chain cycle $\mathcal{C}(C_{n_1},C_{n_2},$
	$\dots,C_{n_m})$ constructed by even cycles with respect to the vertices $\{v^{i}_{\frac{n_{i}}{2}+1},v^{i+1}_1\mid i=1,2,\dots, m-1\}$.
	\begin{description}
		\item[$a$] 
		Let $i=k$. Then $v^i_j$MMD$v^i_l$ if and only if $d(v^i_j,v^i_l)=d(C_{n_i})$. 
		\item[$b$]
		Let $i\neq k$. Then $v^i_j$MMD$v^k_l$ if and only if $d(v^i_j,v^k_l)=d(\mathcal{C}(C_{n_1},C_{n_2},\dots,C_{n_m}))$.
	\end{description}
\end{Thm}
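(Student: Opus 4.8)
The plan is to reduce every statement about mutual maximal distance in the chain to a statement inside a single even cycle, where Lemma~\ref{L9} and the unique-antipode property of even cycles apply. First I would record three structural facts. (i) The set $V_1=\{v^2_1,\dots,v^m_1\}$ is exactly the set of identified (degree $4$) vertices, since $v^{i+1}_1=v^i_{\frac{n_i}{2}+1}$ for $i=1,\dots,m-1$; consequently every vertex of $V_2$ has degree $2$ and both of its neighbours lie in its own cycle. (ii) For two vertices of the same cycle $C_{n_i}$ the shortest path never leaves that cycle, since any detour into an adjacent cycle must return through the unique shared vertex and is strictly longer, so $d(v^i_a,v^i_b)=d_{C_{n_i}}(v^i_a,v^i_b)$. (iii) For $i<k$ every path from a vertex of $C_{n_i}$ to a vertex of $C_{n_k}$ passes through the exit $v^i_{\frac{n_i}{2}+1}$ and the entry $v^k_1$, giving the decomposition $d(v^i_j,v^k_l)=d_{C_{n_i}}(v^i_j,v^i_{\frac{n_i}{2}+1})+\big(\sum_{i<t<k}\tfrac{n_t}{2}\big)+d_{C_{n_k}}(v^k_1,v^k_l)$. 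Maximising the two end terms and the telescoping middle sum then yields $d(\mathcal{C})=\sum_{t=1}^{m}\tfrac{n_t}{2}$, attained only by the pair $(v^1_1,\,v^m_{\frac{n_m}{2}+1})$.

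For part $a$ with $i=k$, I would use (i) and (ii): since $v^i_j\in V_2$ has both neighbours in $C_{n_i}$ and same-cycle distances are cycle distances, the relation $v^i_j\,\mathrm{MD}\,v^i_l$ means precisely that $v^i_j$ is a local maximum of the function $u\mapsto d_{C_{n_i}}(u,v^i_l)$ on $C_{n_i}$. Because $n_i$ is even, this distance function has a unique local (hence global) maximum at the antipode of $v^i_l$, so the condition forces $d(v^i_j,v^i_l)=\tfrac{n_i}{2}=d(C_{n_i})$; the converse is immediate from the same computation, which in effect recovers Lemma~\ref{L9} inside the chain.

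For part $b$ with $i<k$, fact (iii) shows that the neighbours of $v^i_j$, which lie in $C_{n_i}$, differ from $v^i_j$ only in the term $d_{C_{n_i}}(\cdot,v^i_{\frac{n_i}{2}+1})$, so $v^i_j\,\mathrm{MD}\,v^k_l$ is equivalent to $v^i_j$ being a local maximum of the distance to the exit $v^i_{\frac{n_i}{2}+1}$; by the even-cycle argument this forces $v^i_j$ to be the antipode of the exit, namely $v^i_1$, which lies in $V_2$ only when $i=1$. Symmetrically, $v^k_l\,\mathrm{MD}\,v^i_j$ forces $v^k_l$ to be the antipode of the entry $v^k_1$, namely the exit $v^k_{\frac{n_k}{2}+1}$, which lies in $V_2$ only when $k=m$. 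Hence $v^i_j\,\mathrm{MMD}\,v^k_l$ forces $(v^i_j,v^k_l)=(v^1_1,v^m_{\frac{n_m}{2}+1})$, i.e. $d(v^i_j,v^k_l)=d(\mathcal{C})$; conversely the unique realisation of the diameter in (iii) is this endpoint pair, and a direct check, since both endpoints have degree $2$ and are antipodal to their exit and entry respectively, shows it is MMD.

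The bookkeeping in fact (iii) — in particular verifying that the shortest path between vertices of different cycles is forced through the shared vertices and that the resulting decomposition is maximised only at the two global endpoints — is the step I expect to be the main obstacle; once it and the degree-$2$ observation (i) are in place, both parts collapse to the single-cycle statement that a local maximum of distance in an even cycle is the unique antipode. Throughout, Lemma~\ref{L2} guarantees that excluding $V_1$ loses no MMD pairs, so restricting to $v^i_j,v^k_l\in V_2$ is harmless.
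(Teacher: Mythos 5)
Your proposal is correct, and it proves the theorem by a more systematic route than the paper, though both ultimately rest on the same two geometric facts: shortest paths between different cycles are forced through the identified cut vertices, and the distance function to a fixed vertex on an even cycle has a unique local maximum, at the antipode. The paper proves the converse directions by exhibiting, in a case analysis over whether each endpoint lies in $U_1$ or $U_2$ of its cycle, an explicit shortest path and asserting that its existence shows the pair is not MMD; the point where that assertion could fail (precisely when an endpoint is antipodal to its exit or entry vertex, i.e.\ at $v^1_1$ or $v^m_{\frac{n_m}{2}+1}$) is left implicit, and the forward direction of part (b) is dismissed with ``clearly''. You instead write down the distance decomposition through the cut vertices, observe that for a degree-two vertex of $V_2$ the relation $u\,$MD$\,v$ is exactly the statement that $u$ is a local maximum of the appropriate single-cycle distance function, and conclude from uniqueness of the antipode that the only candidate MMD pair across cycles is $(v^1_1,v^m_{\frac{n_m}{2}+1})$, which you separately identify as the unique diametral pair. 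What this buys is a watertight ``only if'' direction in part (b) and an explicit value $d(\mathcal{C})=\sum_{t=1}^{m}\frac{n_t}{2}$ that the paper never states; what the paper's version buys is brevity and independence from the diameter computation. The one step you flag as delicate, the forced passage through the shared vertices, is exactly the cut-vertex property of the chain construction and is sound.
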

\begin{proof} 
	$(a).$	Let $d(v^i_j,v^i_l)=d(C_{n_i})$. Then from Lemma \ref{L9}, we have $v^i_j$MMD$v^i_l$.
	
	Conversely, let  $v^i_j$MMD$v^i_l$ and $j<l$. On the contrary, assume that  $d(v^i_j,v^i_l)< d(C_{n_i})$. Since $N(v^i_j)=\{v^i_{j-1},v^i_{j+1}\}$ and $N(v^i_l)=\{v^i_{l-1},v^i_{l+1}\}$. Note that either   $v^i_jv^i_{j+1}\dots v^{i}_l$ or $v^i_jv^i_{j-1}\dots v^{i}_1v^i_{n_i}\dots v^i_l$ is a shortest path from $v^i_j$ to $v^i_l$. This shows that $v^i_j$ and $v^i_l$ are not mutually maximally distant which contradicts our supposition that $v^i_j$MMD$v^i_l$.
	
	$(b).$ Let $d(v^i_j,v^k_l)=d(\mathcal{C}(C_{n_1},C_{n_2},\dots,C_{n_m}))$. Then clearly $v^i_j$MMD$v^k_l$. 
	
	Conversely, let $u^i_j$MMD$u^k_l$ and let  $d(v^i_j,v^k_l)<d(\mathcal{C}(C_{n_1},C_{n_2},\dots,C_{n_m}))$. Since $N(v^{i}_{j})=\{v^i_{j-1},$ 
	$v^i_{j+1}\}$ and $N(v^{k}_{l})=\{v^k_{l-1},u^k_{l+1}\}$. If $v^i_j\in U_1(C_{n_i})$ and $v^k_l\in U_1(C_{n_k})$, then $P_1=v^i_jv^i_{j+1}\dots v^{i+1}_1v^{i+1}_{2}\dots $ 
	$v^{k}_1 v^{k}_{2}\dots v^{k}_l$ is a shortest path from $v^i_j$ to $v^k_l$. This clearly shows that $v^i_j$ is not mutually maximally distant to $v^k_l$. Similarly, if  $v^i_j\in U_2(C_{n_i})$ and $v^k_l\in U_2(C_{n_k})$, then $P_2=v^i_jv^i_{j-1}\dots v^{i+1}_1v^{i+1}_{n_{i+1}}\dots v^{k}_1$ $v^{k}_{n_k}\dots v^{k}_l$ is a shortest path from $v^i_j$ to $v^k_l$. This shows that $v^i_j$ is not mutually maximally distant to $v^k_l$. Moreover, If $v^i_j\in U_1(C_{n_i})$ and $v^k_l\in U_2(C_{n_k})$, then $R_1=v^i_jv^i_{j+1}\dots v^{i+1}_1v^{i+1}_{2}\dots v^{k}_1,$ $v^{k}_{n_k}\dots v^{k}_l$ is a shortest path from $v^i_j$ to $v^k_l$. This clearly shows that $v^i_j$ is not mutually maximally distant to $v^k_l$. Similarly, if  $v^i_j\in U_2(C_{n_i})$ and $v^k_l\in U_1(C_{n_k})$, then $R_2=v^i_jv^i_{j-1}\dots v^{i+1}_1v^{i+1}_{2}\dots v^{k}_1,$ $v^{k}_{2}\dots v^{k}_l$ is a shortest path from $v^i_j$ to $v^k_l$, which shows that $v^i_j$ is not mutually maximally distant to $v^k_l$.
\end{proof}
For each $i\in \{1,2,\dots, m\}$, Theorem \ref{T1} $(a)$ implies
\begin{equation}\label{e}
A=\{v^{i}_{j}v^{i}_{j+\frac{n_i}{2}}\mid j=2,3,\dots,\frac{n_i}{2},\frac{n_i}{2}+2,\frac{n_i}{2}+3,\dots,n_i \}\subseteq E(\mathcal{C}_{SR}),
\end{equation}
where $j+\frac{n_i}{2}$ are integers modulo $n_{i}$. Similarly, Theorem \ref{T1} $(b)$ implies $v^1_1v^m_{\frac{n_m}{2}+1}\in E(\mathcal{C}_{SR})$. Thus $E(\mathcal{C}_{SR})=A\cup \{v^1_1v^m_{\frac{n_m}{2}+1}\}$.
\begin{Lem}\label{L3}
	Let $\mathcal{C}(C_{n_1},C_{n_2},\dots,C_{n_m})$ ba a chain cycle constructed by even cycles with respect to the vertices $\{v^{i}_{\frac{n_{i}}{2}+1},v^{i+1}_1\mid i=1,2,\dots, m-1\}$ and each $n_i\geq 4$. Then $\alpha(\mathcal{C}_{SR}( C_{n_1},C_{n_2}$ 
	$,\dots,C_{n_m}) =1+\sum_{i=1}^{m}\frac{n_{i}-2}{2}$.
\end{Lem}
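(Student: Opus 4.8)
The plan is to exploit the explicit description of the strong resolving graph obtained just before the statement, namely $E(\mathcal{C}_{SR})=A\cup\{v^1_1v^m_{\frac{n_m}{2}+1}\}$ with $A$ as in \eqref{e}, and to show that $\mathcal{C}_{SR}$ is nothing but a \emph{matching} together with a collection of isolated vertices. Once this structural fact is in hand, the vertex cover number of $\mathcal{C}_{SR}$ is simply the number of edges, and a direct count produces the claimed value $1+\sum_{i=1}^m\frac{n_i-2}{2}$.

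First I would analyze the edges of $A$ lying inside a single cycle $C_{n_i}$. These join $v^i_j$ to its antipode $v^i_{j+\frac{n_i}{2}}$ for $j\in\{2,\ldots,\frac{n_i}{2}\}\cup\{\frac{n_i}{2}+2,\ldots,n_i\}$. Since $n_i$ is even and $n_i\geq 4$, the map $j\mapsto j+\frac{n_i}{2}\pmod{n_i}$ is a fixed-point-free involution, so these edges pair the $n_i-2$ vertices $v^i_2,\ldots,v^i_{\frac{n_i}{2}},v^i_{\frac{n_i}{2}+2},\ldots,v^i_{n_i}$ into $\frac{n_i-2}{2}$ pairwise disjoint edges; in particular no vertex of $C_{n_i}$ lies on two edges of $A$, and the only two vertices of $C_{n_i}$ omitted from this matching are $v^i_1$ and $v^i_{\frac{n_i}{2}+1}$.

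Next I would check that edges arising from different cycles are vertex-disjoint and that the extra edge $v^1_1v^m_{\frac{n_m}{2}+1}$ collides with nothing in $A$. The only vertices shared by consecutive cycles are the identification vertices, and by Lemma \ref{L2} every vertex of $V_1$ is isolated in $\mathcal{C}_{SR}$; each such identification vertex is precisely one of the omitted vertices $v^i_1$ or $v^i_{\frac{n_i}{2}+1}$, hence carries no edge of $A$. The endpoints $v^1_1$ and $v^m_{\frac{n_m}{2}+1}$ of the extra edge are likewise omitted from $A$ (they correspond to the excluded indices $j=1$ and $j=\frac{n_m}{2}+1$), so the extra edge shares no endpoint with any edge of $A$. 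Consequently the entire set $E(\mathcal{C}_{SR})$ is a matching, i.e.\ no two of its edges meet.

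It then follows that $\mathcal{C}_{SR}$ is a disjoint union of $K_2$'s and isolated vertices, so $\alpha(\mathcal{C}_{SR})$ equals the number of edges: selecting one endpoint from each edge covers all edges, while pairwise disjointness forces at least one cover vertex per edge. Counting the $\frac{n_i-2}{2}$ antipodal edges over all $i$ together with the single edge $v^1_1v^m_{\frac{n_m}{2}+1}$ yields $\alpha(\mathcal{C}_{SR})=1+\sum_{i=1}^m\frac{n_i-2}{2}$. The step demanding the most care is the disjointness verification — specifically, confirming that every chain/identification vertex is an omitted (hence isolated) vertex and that the extra edge attaches only to the two genuine endpoints $v^1_1$ and $v^m_{\frac{n_m}{2}+1}$; once this bookkeeping is settled, the edge count and the vertex-cover conclusion are immediate.
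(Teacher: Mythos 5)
Your proposal is correct and follows essentially the same route as the paper: both identify the edge set $E(\mathcal{C}_{SR})=A\cup\{v^1_1v^m_{\frac{n_m}{2}+1}\}$ as a disjoint union of $1+\sum_{i=1}^m\frac{n_i-2}{2}$ copies of $K_2$ (plus isolated vertices) and conclude that the vertex cover number equals the number of edges. You are in fact somewhat more careful than the paper in explicitly verifying that the antipodal pairing is a fixed-point-free involution, that the identification vertices and the endpoints of the extra edge are exactly the omitted vertices, and hence that no two edges share an endpoint.
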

\begin{proof}
	We construct a vertex cover of strong resolving graph of $\mathcal{C}(C_{n_1},C_{n_2},\dots,C_{n_m})$ with minimum cardinality. From \eqref{e}, we note that the vertices $\{v^{i}_{j},v^{i}_{j+\frac{n_i}{2}}\mid j=2,3,\dots,\frac{n_i}{2},\frac{n_i}{2}+2,\frac{n_i}{2}+3,\dots,n_i \}$, for each $i\in \{1,2,\dots,m\}$, form $\sum_{i=1}^{m}\frac{n_{i}-2}{2}$ copies of $K_2$. Thus, the $\sum_{i=1}^{m}\frac{n_{i}-2}{2}$ vertices $\{v^{i}_{j}\mid j=2,3,\dots,\frac{n_i}{2},\frac{n_i}{2}+2,\frac{n_i}{2}+3,\dots,n_i \}$, for each $i\in \{1,2,\dots,m\}$, are minimum number of vertices to cover the edges of $A$. Let $S=\{v^{i}_{j}\mid j=2,3,\dots,\frac{n_i}{2},\frac{n_i}{2}+2,\frac{n_i}{2}+3,\dots,n_i \}$. Furthermore, since $v^1_{1}v^m_{\frac{n_m}{2}+1}\in E(\mathcal{C}_{SR})$. Thus, the vertex cover of the strong resolving graph of chain cycle $\mathcal{C}(C_{n_1},C_{n_2},\dots,C_{n_m})$ with minimum cardinality is $S:=S\cup \{v_1^1\}$. Hence $\alpha(\mathcal{C}_{SR}(C_{n_1},C_{n_2},\dots,C_{n_m}))=1+\sum_{i=1}^{m}\frac{n_{i}-2}{2}$.
\end{proof}
\begin{Thm}
	Let $\{C_{n_i}\}_{i=1}^m$ be $m$ disjoint cycles with each $n_i$ is even and $n_i\geq 4$, then $sdim(\mathcal{C}(C_{n_1},$ 
	$C_{n_2},\dots,C_{n_m}))=1+\sum_{i=1}^{m}\frac{n_{i}-2}{2}$.	
\end{Thm}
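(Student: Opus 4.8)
The plan is to obtain this theorem as an immediate consequence of the Oellermann--Peters-Fransen reduction together with the vertex-cover count established just before it. Theorem \ref{T3} states that for any connected graph $G$ one has $sdim(G)=\alpha(G_{SR})$, so the first and only essential step is to apply this identity to the chain cycle $\mathcal{C}(C_{n_1},C_{n_2},\dots,C_{n_m})$. Before invoking it I would note that the chain cycle is indeed connected, being formed from connected cycles by successive vertex identifications, so the hypothesis of Theorem \ref{T3} is met.

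Having reduced $sdim(\mathcal{C})$ to $\alpha(\mathcal{C}_{SR})$, the second step is simply to substitute the value computed in Lemma \ref{L3}, namely $\alpha(\mathcal{C}_{SR}(C_{n_1},\dots,C_{n_m}))=1+\sum_{i=1}^{m}\frac{n_i-2}{2}$, under the same hypotheses (each $n_i$ even and $n_i\geq 4$). Chaining the two equalities gives $sdim(\mathcal{C})=1+\sum_{i=1}^{m}\frac{n_i-2}{2}$, which is exactly the claim.

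I want to emphasize that the genuine content has been front-loaded into the preceding results, so there is essentially no obstacle remaining in the theorem itself. The hard part lived in Theorem \ref{T1}, which classifies the mutually maximally distant pairs and thereby pins down $E(\mathcal{C}_{SR})=A\cup\{v_1^1v_{\frac{n_m}{2}+1}^m\}$ as in \eqref{e}, and in Lemma \ref{L3}, where the edge set $A$ is recognized as $\sum_{i=1}^{m}\frac{n_i-2}{2}$ vertex-disjoint copies of $K_2$ (forcing that many cover vertices) while the single extra edge $v_1^1v_{\frac{n_m}{2}+1}^m$ is shown to require one additional vertex $v_1^1$. Once those facts are in hand, the present theorem is a one-line corollary; the only thing I would double-check is that the hypotheses of Lemma \ref{L3} and of Theorem \ref{T3} align, which they do.
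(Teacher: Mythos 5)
Your proposal matches the paper's argument exactly: the paper proves this theorem in one line by combining Theorem \ref{T3} with Lemma \ref{L3}, which is precisely the chain of equalities you describe. Your additional remarks on connectivity and on where the real work resides are sound but not needed beyond what the paper states.
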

\begin{proof}
	The proof follows from Lemma \ref{L3} and Theorem \ref{T3}.
\end{proof} 
In the next theorem, we find the mutually maximally distant vertices in chain cycle  $\mathcal{C}(C_{n_1},C_{n_2},$ 
$\dots,C_{n_m})$, with each $n_i$ is odd, with respect to the vertices $\{v^{i}_{\frac{n_{i}+1}{2}+1}, v^{i+1}_{1} \mid i=1,2,\dots,m-1\}$.
\begin{Thm}\label{T2}	
	Let $v^i_j,v^k_l\in V_2$, where $i,k\in \{1,2,\dots,m\}$, in a chain cycle $\mathcal{C}(C_{n_1},C_{n_2},$
	$\dots,C_{n_m})$ constructed by odd cycles with respect to the vertices $\{v^{i}_{\frac{n_{i}+1}{2}+1}, v^{i+1}_{1} \mid i=1,2,\dots,m-1\}$.
	\begin{description}
		\item[$ (a)$] 
		Let $i=k$. Then $v^i_j$MMD$v^i_l$ if and only if $d(v^i_j,v^i_l)=d(C_{n_i})$. 
		\item[$(b)$]
		Let $i\neq k$. 
		
		\begin{description}
			\item[$(1)$] 
			For $i=1$ and $k=m$, then $v^1_j$MMD$v^m_l$ if and only if $j\in\{1,2\}$ and $l\in \{\frac{n_m+1}{2},\frac{n_m+1}{2}+1\}$,
			\item[$(2)$] 
			For $i=1$ and $k\in \{2,3,\dots,m-1\}$, then $v^1_j$MMD$v^k_l$ if and only if $j\in\{1,2\}$ and $l=\frac{n_k+1}{2}$
			\item[$(3)$] 
			For $i\in \{2,3,\dots,m-1\}$ and $k=m$, then $v^i_l$MMD$v^m_j$ if and only if $j=2$ and $l\in\{\frac{n_m+1}{2},\frac{n_m+1}{2}+1\}$,
			\item[$(4)$]
			For $i\in \{2,3,\dots,m-1\}$ and $k\in \{2,3,\dots,m-1\}$, then $v^i_j$MMD$v^k_l$ if and only if $j=2$ and $l=\frac{n_k+1}{2}$.
		\end{description}
	\end{description}
\end{Thm}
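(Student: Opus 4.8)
The plan is to follow the template of Theorem~\ref{T1}, but to track carefully the fact that in an \emph{odd} cycle each vertex has \emph{two} vertices at the diameter distance, not one; this is the source of the four-way split in part~$(b)$. I first record a structural fact used throughout: since the chain is linearly ordered (macroscopically a path of cycles joined at single vertices), any shortest path leaving a cycle $C_{n_i}$ must do so through one of its two junction vertices, and consequently a shortest path between two vertices of the \emph{same} cycle $C_{n_i}$ never leaves $C_{n_i}$. Part~$(a)$ is then immediate: MMD within $C_{n_i}$ is governed purely by the cycle metric, so Lemma~\ref{L9} gives $v^i_j\,\mathrm{MMD}\,v^i_l\iff d(v^i_j,v^i_l)=d(C_{n_i})$, and for the converse one of the two cycle-arcs through $v^i_j$ serves as the shortest path witnessing non-MMD whenever the distance is below the diameter.

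For part~$(b)$ assume $i<k$. The right junction of $C_{n_i}$ is $v^i_{\frac{n_i+3}{2}}=v^{i+1}_1$ and the left junction of $C_{n_k}$ is $v^k_1$; since these are the only gateways into the two cycles from the intervening spine, every shortest $v^i_j$--$v^k_l$ path factors as
\begin{equation*}
d(v^i_j,v^k_l)=d\!\left(v^i_j,v^i_{\frac{n_i+3}{2}}\right)+D_{i,k}+d\!\left(v^k_1,v^k_l\right),
\end{equation*}
where $D_{i,k}=d\!\left(v^i_{\frac{n_i+3}{2}},v^k_1\right)$ is the constant spine length through the intermediate cycles and depends on neither $j$ nor $l$.

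With this factorization the two maximal-distance requirements decouple. The neighbours of a non-junction vertex $v^i_j$ lie inside $C_{n_i}$, so ``$v^i_j$ is maximally distant from $v^k_l$'' is equivalent to ``$v^i_j$ attains $\max d(\cdot,v^i_{\frac{n_i+3}{2}})=d(C_{n_i})$'', i.e.\ $v^i_j$ is a diameter-antipode of the right junction; in the odd cycle $C_{n_i}$ these antipodes are exactly $v^i_1$ and $v^i_2$. Symmetrically, ``$v^k_l$ is maximally distant from $v^i_j$'' forces $v^k_l\in\{v^k_{\frac{n_k+1}{2}},v^k_{\frac{n_k+3}{2}}\}$, the two antipodes of the left junction $v^k_1$. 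I then discard any antipode that is itself a junction vertex, invoking Lemma~\ref{L2}: $v^i_1\in V_1$ exactly when $i\geq2$, and $v^k_{\frac{n_k+3}{2}}=v^{k+1}_1\in V_1$ exactly when $k\leq m-1$. The surviving candidates are precisely cases $(1)$--$(4)$: both antipodes persist at an \emph{end} of the chain ($i=1$ keeps $j\in\{1,2\}$; $k=m$ keeps $l\in\{\frac{n_m+1}{2},\frac{n_m+1}{2}+1\}$), while an interior cycle keeps only its single non-junction antipode ($j=2$, respectively $l=\frac{n_k+1}{2}$). Conversely, for every choice of positions in these lists the factorization shows both maximality conditions hold, giving MMD; this supplies the ``if'' direction uniformly.

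The step I expect to be the main obstacle is making the factorization airtight --- establishing that a shortest inter-cycle path is forced through the junctions \emph{in order} and can never shorten itself by detouring around an intermediate cycle --- since everything downstream rests on it. Coupled with this is the bookkeeping of the converse, where for each \emph{non}-antipodal index one must exhibit an explicit neighbour strictly closer to the opposite vertex; this is routine in the linear chain, but the parallel case analysis ($i=1$ versus interior $i$, and $k=m$ versus interior $k$) must be carried out so that the odd-cycle pair of antipodes is pruned by Lemma~\ref{L2} without omission or double-counting.
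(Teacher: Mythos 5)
Your argument is correct, but it is organized quite differently from the paper's. The paper proves part $(b)$ by brute force: for cases $(1)$ and $(2)$ it writes down explicit shortest paths ($P_1,P_2,Q_1,\dots,Q_4$, then $P_3,P_4,R_1,\dots,R_4$), computes their lengths, checks the neighbour inequalities by hand, and then runs a three-case converse for each, declaring cases $(3)$ and $(4)$ ``similar''. You instead prove one structural fact --- every identified vertex $v^{i+1}_1=v^i_{\frac{n_i+1}{2}+1}$ is a cut vertex, so for $i<k$ every $v^i_j$--$v^k_l$ geodesic is forced through the junctions in order and the distance splits as $d(v^i_j,J_i)+D_{i,k}+d(v^k_1,v^k_l)$ --- and then observe that the two maximal-distance conditions decouple: $v^i_j\,\mathrm{MD}\,v^k_l$ holds iff $v^i_j$ is a diametral antipode of the right junction $J_i$ inside $C_{n_i}$, and symmetrically for $v^k_l$ and $v^k_1$. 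Since an odd cycle gives each vertex exactly two antipodes ($v^i_1,v^i_2$ for $J_i$; $v^k_{(n_k+1)/2}$ and $v^k_{(n_k+1)/2+1}$ for $v^k_1$), and the hypothesis $v^i_j,v^k_l\in V_2$ (equivalently Lemma \ref{L2}) deletes whichever antipode is itself a junction, the four cases of $(b)$ drop out simultaneously; the same computation with a unique antipode recovers Theorem \ref{T1}$(b)$. What your route buys is uniformity (no ``similarly'' for half the cases), an explanation of where the even/odd dichotomy comes from, and a converse requiring no separate case analysis; what it costs is that the cut-vertex factorization must actually be stated and proved (a short standard argument, but it is the load-bearing step, as you correctly flag). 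One slip in your closing remarks: to refute maximal distance you must exhibit a neighbour strictly \emph{farther} from the opposite vertex, not closer --- though your decoupling already supplies such a neighbour (the one on the long arc toward $J_i$), so nothing in the proof itself is affected.
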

\begin{proof}
	(a). Proof is similar to the proof of Theorem \ref{T1} part (a).
	
	(b). Let $i\neq k$. We prove the cases $(1)$ and $(2)$ and the proof of the cases $(3)$ and $(4)$ are similar. 
	
	$(1)$. Suppose that $j\in \{1,2\}$ and $l\in \{\frac{n_m+1}{2},\frac{n_m+1}{2}+1\}$. We shall show that  $v^1_j$MMD$v^m_l$. 
	Note that $N(v_1^1)=\{v_2^1,v^1_{n_1}\}$ and $N(v_2^1)=\{v_3^1,v^1_{1}\}$. Then $P_1:=v^1_1v^1_{n_1}v^1_{n_1-1}\dots v^2_1v^2_{n_2}\dots v^m_1v^m_{n_m}\dots$
	$  v^m_{\frac{n_m+1}{2}+2}v^m_{\frac{n_m+1}{2}+1}$ and $P_2:=v^1_2v^1_{3}v^1_{4}\dots v^2_1v^2_{n_2}\dots v^m_1v^m_{n_m}$
	$\dots  v^m_{\frac{n_m+1}{2}+2}v^m_{\frac{n_m+1}{2}+1}$ are shortest path from $v_1^1$ to $v^m_{\frac{n_m+1}{2}+1}$ and from $v^1_2$ to $v^m_{\frac{n_m+1}{2}+1}$, respectively, of length $\sum_{i=1}^m\frac{n_i-1}{2}=s$. Thus from $P_1$ and $P_2$, we have 
	\begin{center}
		$d(v^1_{n_1},v^m_{\frac{n_m+1}{2}+1})=d(v^1_{3},v^m_{\frac{n_m+1}{2}+1})=s-1$.
	\end{center}  
	That is, $v^1_j$MD$v^m_{\frac{n_m+1}{2}+1}$ for $j\in \{1,2\}$.\\ 
	Again note that $N(v^m_{\frac{n_m+1}{2}+1})=\{v^m_{\frac{n_m+1}{2}+2},v^m_{\frac{n_m+1}{2}}\}$. Then $Q_1:=v^m_{\frac{n_m+1}{2}+1}v^m_{\frac{n_m+1}{2}+2}$
	$\dots v^m_1v^{m-1}_{\frac{n_{m-1}+1}{2}+2}$
	$v^{m-1}_{\frac{n_{m-1}+1}{2}+3}\dots v^2_1v^1_{\frac{n_{1}+1}{2}+2}\dots v^1_{n_1}v^1_1$ and $Q_2:=v^m_{\frac{n_m+1}{2}+1}v^m_{\frac{n_m+1}{2}+2}\dots v^m_1$
	
	$v^{m-1}_{\frac{n_{m-1}+1}{2}+2}v^{m-1}_{\frac{n_{m-1}+1}{2}+3}\dots v^2_1v^1_{\frac{n_{1}+1}{2}}\dots$
	$ v^1_{3}v^1_2$ are shortest path from $v^m_{\frac{n_m+1}{2}+1}$ to $v_1^1$ and from $v^m_{\frac{n_m+1}{2}+1}$ to $v^1_2$  of lenght $s$.  Thus from $Q_1$ and $Q_2$, we have 
	\begin{equation}\label{T4e1} 
	d(v^1_{1},v^m_{\frac{n_m+1}{2}+2})=d(v^1_{2},v^m_{\frac{n_m+1}{2}+2})=s-1.
	\end{equation}  
	Also, $Q_3:=v^m_{\frac{n_m+1}{2}}v^m_{\frac{n_m+1}{2}-1}\dots v^m_1v^{m-1}_{\frac{n_{m-1}+1}{2}+2}v^{m-1}_{\frac{n_{m-1}+1}{2}+3}\dots v^2_1v^1_{\frac{n_{1}+1}{2}+2}\dots v^1_{n_1}v^1_1$ and $Q_4:=v^m_{\frac{n_m+1}{2}}v^m_{\frac{n_m+1}{2}-1}$
	$\dots v^m_1v^{m-1}_{\frac{n_{m-1}+1}{2}+2}v^{m-1}_{\frac{n_{m-1}+1}{2}+3}\dots v^2_1v^1_{\frac{n_{1}+1}{2}}\dots v^1_{3}v^1_2$ are shortest path from $v^m_{\frac{n_m+1}{2}}$ to $v_1^1$ and from $v^m_{\frac{n_m+1}{2}}$ to $v^1_2$  of lenght $s$. Thus from $Q_3$, $Q_4$ and equation \eqref{T4e1}, we have $v^m_{\frac{n_m+1}{2}+1}$MD$v^1_j$ for $j\in \{1,2\}$. Hence $v^1_j$MMD$v^m_{\frac{n_m+1}{2}+1}$ for $j\in \{1,2\}$. Similarly, we can prove that $v^1_j$MMD$v^m_{\frac{n_m+1}{2}}$ for $j\in \{1,2\}$. Summing up, we have $v^1_j$MMD$v^m_{l}$ for $j\in \{1,2\}$ and $l\in \{\frac{n_m+1}{2},\frac{n_m+1}{2}+1\}$.
	
	Conversely, Suppose $v^1_j$MMD$v^m_{l}$ then we show that $j\in \{1,2\}$ and $l\in\{\frac{n_m+1}{2},\frac{n_m+1}{2}+1\}$. On the contrary, we shall prove the following cases:\\
	\textbf{Case $1$}:  $j\notin \{1,2\}$ but $l\in\{\frac{n_m+1}{2},\frac{n_m+1}{2}+1\}$,\\
	\textbf{Case $2$}: $l\notin\{\frac{n_m+1}{2},\frac{n_m+1}{2}+1\}$ but $j\in \{1,2\}$,\\
	\textbf{Case $3$}: $j\notin \{1,2\}$ and $l\notin\{\frac{n_m+1}{2},\frac{n_m+1}{2}+1\}$. 
	
	\textbf{Case $1$}: First suppose that $j\notin \{1,2\}$ and $l\in\{\frac{n_m+1}{2},\frac{n_m+1}{2}+1\}$. Let $j\in \{3,4,\dots,\frac{n_1+1}{2}\}$. Note that $N(v^1_j)=\{v^1_{j-1},v^1_{j+1}\}$. Then $v^1_jv^1_{j+1}v^1_{j+2}\dots v^2_{1}v^2_{n_2}\dots v^m_1v^m_{n_m}$
	$\dots  v^m_{\frac{n_m+1}{2}+2}v^m_{\frac{n_m+1}{2}+1}$ is a shortest path from $v^1_j$ to $v^m_{\frac{n_m+1}{2}+1}$ of length say $r$. But $v^1_{j-1}v^1_jv^1_{j+1}v^1_{j+2}\dots v^2_{1}v^2_{n_2}$
	$\dots v^m_1v^m_{n_m}\dots  v^m_{\frac{n_m+1}{2}+2}v^m_{\frac{n_m+1}{2}+1}$ is a shortest path from $v^1_{j-1}$ to $v^m_{\frac{n_m+1}{2}+1}$ of length $r+1$. That is,
	\begin{center}
		$d(v^1_{j-1},v^m_{\frac{n_m+1}{2}+1})=r+1$.
	\end{center}
	Thus $v^1_j$ and $v^m_{\frac{n_m+1}{2}+1}$ are not MMD.\\ Now let $j\in \{\frac{n_1+1}{2}+2,\frac{n_1+1}{2}+3,\dots,n_1\}$.   
	Note that $N(v^1_j)=\{v^1_{j-1},v^1_{j+1}\}$. Then $v^1_jv^1_{j-1}v^1_{j-2}\dots v^2_{1}v^2_{n_2}$
	$\dots v^m_1v^m_{n_m}\dots  v^m_{\frac{n_m+1}{2}+2}v^m_{\frac{n_m+1}{2}+1}$ is a shortest path from $v^1_j$ to $v^m_{\frac{n_m+1}{2}+1}$ of length say $r'$. But $v^1_{j+1}v^1_jv^1_{j-1}v^1_{j-2}$
	$\dots v^2_{1}v^2_{n_2}\dots v^m_1v^m_{n_m}\dots  v^m_{\frac{n_m+1}{2}+2}v^m_{\frac{n_m+1}{2}+1}$ is a shortest path from $v^1_j$ to $v^m_{\frac{n_m+1}{2}+1}$ of length $r'+1$.
	That is,
	\begin{center}
		$d(v^1_{j+1},v^m_{\frac{n_m+1}{2}+1})=r'+1$.
	\end{center} 
	Thus $v^1_{j}$ and $v^m_{\frac{n_m+1}{2}+1}$ are not MMD for $j\notin\{1,2\}$. Similarly, we can prove that $v^1_{j}$ and $v^m_{\frac{n_m+1}{2}}$ are not MMD for $j\notin\{1,2\}$. Summing up, we have $v^1_{j}$ and $v^m_{l}$ are not MMD for $j\notin\{1,2\}$ and $l\in\{\frac{n_m+1}{2},\frac{n_m+1}{2}+1\}$.
	
	\textbf{Case $2$}: Secondly, suppose that $l\notin\{\frac{n_m+1}{2},\frac{n_m+1}{2}+1\}$ and $j\in\{1,2\}$. Let $l\in\{2,3,\dots \frac{n_m+1}{2}-1\}$. Note that $N(v^m_l)=\{v^m_{l-1},v^m_{l+1}\}$. Then $v^m_{l}v^m_{l-1}\dots v^m_1v^{m-1}_{\frac{n_m+1}{2}+2}\dots v^2_1$
	$v^1_{\frac{n_m+1}{2}+2}\dots v^1_{n_1}v^1_1$ 
	is a shortest path from $v^m_l$ to $v^1_{1}$ of length say $t$. But $v^m_{l+1}v^m_{l}v^m_{l-1}\dots $
	$v^m_1v^{m-1}_{\frac{n_m+1}{2}+2}\dots v^2_1v^1_{\frac{n_m+1}{2}+2}\dots v^1_{n_1}v^1_1$ is a shortest path from $v^m_{l+1}$ to $v^1_{1}$ of length $t+1$.
	That is,
	\begin{center}
		$d(v^1_{1},v^m_{l+1})=t+1$.
	\end{center} 
	Thus $v^m_l$ and $v^1_1$ are not MMD. \\
	Now let $l\in\{\frac{n_m+1}{2}+2,\frac{n_m+1}{2}+3,\dots n_m\}$. Note that $N(v^m_l)=\{v^m_{l-1},v^m_{l+1}\}$. Then $v^m_{l}v^m_{l+1}\dots v^m_1v^{m-1}_{\frac{n_m+1}{2}+2}$
	$\dots v^2_1v^1_{\frac{n_m+1}{2}+2}\dots v^1_{n_1}v^1_1$ 
	is a shortest path from $v^m_l$ to $v^1_{1}$ of length say $t'$. But $v^m_{l-1}v^m_{l}v^m_{l+1}\dots v^m_1v^{m-1}_{\frac{n_m+1}{2}+2}$
	$\dots v^2_1v^1_{\frac{n_m+1}{2}+2}\dots v^1_{n_1}v^1_1$ is a shortest path from $v^m_{l-1}$ to $v^1_{1}$ of length $t'+1$.
	That is,
	\begin{center}
		$d(v^1_{1},v^m_{l-1})=t'+1$.
	\end{center} 
	This implies, $v^m_l$ and $v^1_1$ are not MMD. Hence $v^m_l$ and $v^1_1$ are not MMD for $l\notin\{\frac{n_m+1}{2},\frac{n_m+1}{2}+1\}$. Similarly, we can prove that $v^m_l$ and $v^1_2$ are not MMD for $l\notin\{\frac{n_m+1}{2},\frac{n_m+1}{2}+1\}$. Summing up, we have $v^m_l$ and $v^1_2$ are not MMD for $l\notin\{\frac{n_m+1}{2},\frac{n_m+1}{2}+1\}$ and $j\in\{1,2\}$. 
	
	\textbf{Case $3$}: Thirdly, suppose that $l\notin\{\frac{n_m+1}{2},\frac{n_m+1}{2}+1\}$ and $j\notin\{1,2\}$. Then the proof is straigt forward from the cases $l\notin\{\frac{n_m+1}{2},\frac{n_m+1}{2}+1\}$ or $j\notin\{1,2\}$. This completes the proof of $(1)$.
	
	$(2)$. Suppose that $j\in \{1,2\}$ and $l=\frac{n_k+1}{2}$. We shall show that  $v^1_j$MMD$v^k_l$. 
	Note that $N(v_1^1)=\{v_2^1,v^1_{n_1}\}$ and $N(v_2^1)=\{v_3^1,v^1_{1}\}$. Then $P_3:=v^1_1v^1_{n_1}v^1_{n_1-1}\dots v^2_1v^2_{n_2}\dots v^k_1v^k_{2}\dots$
	$  v^k_{\frac{n_k+1}{2}-1}v^k_{\frac{n_k+1}{2}}$ and $P_4:=v^1_2v^1_{3}v^1_{4}\dots v^2_1v^2_{n_2}\dots v^k_1v^k_{2}\dots  v^k_{\frac{n_k+1}{2}-1}v^k_{\frac{n_k+1}{2}}$ are shortest path from $v_1^1$ to $v^k_{\frac{n_k+1}{2}}$ and from $v^1_2$ to $v^k_{\frac{n_k+1}{2}}$, respectively, of length $\sum_{i=1}^k\frac{n_i-1}{2}=s$. Thus from $P_3$ and $P_4$, we have 
	\begin{center}
		$d(v^1_{n_1},v^k_{\frac{n_k+1}{2}})=d(v^1_{3},v^k_{\frac{n_k+1}{2}+1})=s-1$.
	\end{center}  
	That is, $v^1_j$MD$v^k_{\frac{n_k+1}{2}}$ for $j\in \{1,2\}$ and $l=\frac{n_k+1}{2}$.\\ 
	Again note that $N(v^k_{\frac{n_k+1}{2}})=\{v^k_{\frac{n_k+1}{2}+1},v^k_{\frac{n_k+1}{2}-1}\}$. Then $R_1:=v^k_{\frac{n_k+1}{2}}v^k_{\frac{n_k+1}{2}-1}\dots v^m_1$
	$v^{m-1}_{\frac{n_{m-1}+1}{2}+2}v^{m-1}_{\frac{n_{m-1}+1}{2}+3}$
	$\dots v^2_1v^1_{\frac{n_{1}+1}{2}+2}\dots v^1_{n_1}v^1_1$ and $R_2:=v^k_{\frac{n_k+1}{2}}v^k_{\frac{n_k+1}{2}-1}\dots v^k_1v^{k-1}_{\frac{n_{k-1}+1}{2}+2}$
	
	$v^{k-1}_{\frac{n_{k-1}+1}{2}+3}\dots v^2_1v^1_{\frac{n_{1}+1}{2}}\dots v^1_{3}v^1_2$ are shortest path from $v^k_{\frac{n_k+1}{2}}$ to $v_1^1$ and from $v^k_{\frac{n_k+1}{2}}$ to $v^1_2$  of lenght $s$.  Thus from $R_1$ and $R_2$, we have 
	\begin{equation}\label{T4e2} 
	d(v^1_{1},v^k_{\frac{n_k+1}{2}-1})=d(v^1_{2},v^k_{\frac{n_k+1}{2}-1})=s-1.
	\end{equation}  
	Also, $R_3:=v^k_{\frac{n_k+1}{2}+1}v^k_{\frac{n_k+1}{2}+2}\dots v^k_1v^{k-1}_{\frac{n_{k-1}+1}{2}+2}v^{k-1}_{\frac{n_{k-1}+1}{2}+3}\dots v^2_1v^1_{\frac{n_{1}+1}{2}+2}\dots v^1_{n_1}v^1_1$ and $R_4:=v^k_{\frac{n_k+1}{2}+1}v^k_{\frac{n_k+1}{2}+2}$
	$\dots v^k_1v^{k-1}_{\frac{n_{k-1}+1}{2}+2}v^{k-1}_{\frac{n_{k-1}+1}{2}+3}\dots v^2_1v^1_{\frac{n_{1}+1}{2}}\dots v^1_{3}v^1_2$ are shortest path from $v^k_{\frac{n_k+1}{2}+1}$ to $v_1^1$ and from $v^k_{\frac{n_k+1}{2}+1}$ to $v^1_2$  of lenght $s$. Thus from $R_3$, $R_4$ and equation \eqref{T4e2}, we have $v^k_{\frac{n_k+1}{2}}$MD$v^1_j$ for $j\in \{1,2\}$. Hence $v^1_j$MMD$v^k_{\frac{n_k+1}{2}}$ for $j\in \{1,2\}$. Thus we have $v^1_j$MMD$v^m_{l}$ for $j\in \{1,2\}$ and $l=\frac{n_m+1}{2}$.
	
	Conversely, Suppose $v^1_j$MMD$v^k_{l}$ and we show that $j\in \{1,2\}$ and $l=\frac{n_k+1}{2}$. On the contrary, we shall prove the following cases:\\
	\textbf{Case $1$}: $j\notin \{1,2\}$ but $l=\frac{n_k+1}{2}$,\\
	\textbf{Case $2$}:
	$l\neq\frac{n_k+1}{2}$ but $j\in \{1,2\}$,\\
	 \textbf{Case $3$}: $j\notin \{1,2\}$ and $l\neq\frac{n_k+1}{2}$. 
	
	\textbf{Case $1$}: First suppose that $j\notin \{1,2\}$ and $l=\frac{n_k+1}{2}$. Let $j\in \{3,4,\dots,\frac{n_1+1}{2}\}$. Note that $N(v^1_j)=\{v^1_{j-1},v^1_{j+1}\}$. Then $v^1_jv^1_{j+1}v^1_{j+2}\dots v^2_{1}v^2_{n_2}\dots v^k_1v^k_{2}\dots  v^k_{\frac{n_k+1}{2}-1}v^k_{\frac{n_k+1}{2}}$ is a shortest path from $v^1_j$ to $v^k_{\frac{n_k+1}{2}}$ of length say $x$. But $v^1_{j-1}v^1_jv^1_{j+1}v^1_{j+2}\dots v^2_{1}v^2_{n_2}\dots v^k_1v^k_{2}\dots  v^k_{\frac{n_k+1}{2}-1}$
	
	$v^k{\frac{n_k+1}{2}}$ is a shortest path from $v^1_{j-1}$ to $v^k_{\frac{n_k+1}{2}}$ of length $x+1$. That is,
	\begin{center}
		$d(v^1_{j-1},v^k_{\frac{n_k+1}{2}})=x+1$.
	\end{center}
	Thus $v^1_j$ and $v^k_{\frac{n_k+1}{2}}$ are not MMD.\\ Now let $j\in \{\frac{n_1+1}{2}+2,\frac{n_1+1}{2}+3\dots,n_1\}$.   
	Note that $N(v^1_j)=\{v^1_{j-1},v^1_{j+1}\}$. Then $v^1_jv^1_{j-1}v^1_{j-2}\dots v^2_{1}v^2_{n_2}$
	$\dots v^k_1v^k_{2}\dots  v^k_{\frac{n_k+1}{2}-1}v^k_{\frac{n_k+1}{2}}$ is a shortest path from $v^1_j$ to $v^k_{\frac{n_k+1}{2}}$ of length say $x'$. But $v^1_{j+1}v^1_jv^1_{j-1}v^1_{j-2}$
	$\dots v^2_{1}v^2_{n_2}\dots v^k_1v^k_{2}\dots  v^k_{\frac{n_k+1}{2}-1}v^k_{\frac{n_k+1}{2}}$ is a shortest path from $v^1_j$ to $v^k_{\frac{n_k+1}{2}}$ of length $x'+1$.
	That is,
	\begin{center}
		$d(v^1_{j+1},v^k_{\frac{n_k+1}{2}})=x'+1$.
	\end{center} 
	Thus $v^1_{j}$ and $v^k_{\frac{n_k+1}{2}}$ are not MMD for $j\notin\{1,2\}$. Summing up, we have $v^1_{j}$ and $v^m_{l}$ are not MMD for $j\notin\{1,2\}$ and $l=\frac{n_k+1}{2}$.
	
	\textbf{Case $2$}: Secondly, suppose that $l\neq\frac{n_k+1}{2}$ and $j\in\{1,2\}$. Let $l\in\{2,3,\dots \frac{n_k+1}{2}-1\}$. Note that $N(v^k_l)=\{v^k_{l-1},v^k_{l+1}\}$. Then $v^k_{l}v^k_{l-1}\dots v^k_1v^{k-1}_{\frac{n_k+1}{2}+2}\dots v^2_1v^1_{\frac{n_k+1}{2}+2}\dots v^1_{n_1}v^1_1$ 
	is a shortest path from $v^m_l$ to $v^1_{1}$ of length say $y$. But $v^k_{l+1}v^k_{l}v^k_{l-1}\dots v^k_1v^{k-1}_{\frac{n_k+1}{2}+2}\dots v^2_1v^1_{\frac{n_k+1}{2}+2}\dots v^1_{n_1}v^1_1$ is a shortest path from $v^k_{l+1}$ to $v^1_{1}$ of length $y+1$.
	That is,
	\begin{center}
		$d(v^1_{1},v^k_{l+1})=y+1$.
	\end{center} 
	Thus $v^k_l$ and $v^1_1$ are not MMD. \\
	Now let $l\in\{\frac{n_m+1}{2}+2,\frac{n_m+1}{2}+3,\dots n_k\}$. Note that $N(v^k_l)=\{v^k_{l-1},v^k_{l+1}\}$. Then $v^k_{l}v^k_{l+1}\dots v^k_1v^{k-1}_{\frac{n_k+1}{2}+2}$
	$\dots v^2_1v^1_{\frac{n_k+1}{2}+2}\dots v^1_{n_1}v^1_1$ 
	is a shortest path from $v^k_l$ to $v^1_{1}$ of length say $y'$. But $v^k_{l-1}v^k_{l}v^k_{l+1}\dots v^k_1v^{k-1}_{\frac{n_k+1}{2}+2}$
	$\dots v^2_1v^1_{\frac{n_k+1}{2}+2}\dots v^1_{n_1}v^1_1$ is a shortest path from $v^k_{l-1}$ to $v^1_{1}$ of length $y'+1$.
	That is,
	\begin{center}
		$d(v^1_{1},v^k_{l-1})=y'+1$.
	\end{center} 
	This implies, $v^k_l$ and $v^1_1$ are not MMD. Hence $v^k_l$ and $v^1_1$ are not MMD for $l\neq\frac{n_k+1}{2}$.  Summing up, we have $v^m_l$ and $v^1_j$ are not MMD for $l\neq\frac{n_k+1}{2}$ and $j\in\{1,2\}$. 
	
	\textbf{Case $3$}: Thirdly, let $j\notin \{1,2\}$ and $l\neq\frac{n_k+1}{2}$. Then the proof is straigt forward from the cases $j\notin \{1,2\}$ or $l\neq\frac{n_k+1}{2}$. This completes the proof of $(2)$.
\end{proof}
From Theorem \ref{T2} (a),  for all $i\in \{2,3,\dots, m-1\}$, we have
\begin{eqnarray}
&A_1=\{v^1_jv^1_{j+\frac{n_1-1}{2}}\mid j=1,2,\dots,\frac{n_1+1}{2},\frac{n_1+1}{2}+2,\dots n_1\}\subseteq E(C_{SR}),\label{T2eq1}\\
&A_2=\{v^m_jv^m_{j+\frac{n_m-1}{2}}\mid j=2,3,\dots, n_m\}\subseteq E(C_{SR}),\\
&A_3=\{v^i_jv^i_{j+\frac{n_i-1}{2}}\mid j=2,3,\dots,\frac{n_i+1}{2},\frac{n_i+1}{2}+2,\dots n_i\}\subseteq E(C_{SR}).
\end{eqnarray}
From Theorem \ref{T2} (b), for all $i,k\in \{2,3,\dots,m-1\}$, we have 
\begin{eqnarray}
&B_1=\{v^1_jv^m_{l}\mid j=1,2. \  and\ l=\frac{n_m+1}{2},\frac{n_m+1}{2}+1.\}\subseteq E(C_{SR}),\\
&B_2=\{v^1_jv^k_{\frac{n_k+1}{2}}\mid j=1,2.\}\subseteq E(C_{SR}),\\
&B_3=\{v^i_2v^m_{l}\mid l=\frac{n_m+1}{2},\frac{n_m+1}{2}+1.\}\subseteq E(C_{SR}),\\
&B_4=\{v^i_2v^k_{\frac{n_k+1}{2}}\}\subseteq E(C_{SR})\label{T2eq2}.
\end{eqnarray}
Thus from \eqref{T2eq1}$\sim$\eqref{T2eq2}, we have $A_1\cup A_2\cup A_3\cup B_1\cup B_2\cup B_3\cup B_4=E(C_{SR})$.

Note that the set of edges $A_1$ form a path $P_{n_1-1}$ with initial vertex $v^1_1$ and final vertex $v^1_2$, that is, 
\begin{equation*}
P_{n_1-1}:=v^1_1v^1_{1+\lfloor\frac{n_1}{2}\rfloor}v^1_{1+2\lfloor\frac{n_1}{2}\rfloor}\dots v^1_{1+(n_1-2)\lfloor\frac{n_1}{2}\rfloor},
\end{equation*} 
where $1+(n_1-2)\lfloor\frac{n_1}{2}\rfloor\equiv 2\ ( mod\ n_1)$.

Similarly, the set of edges $A_2$ form a path $P_{n_m-1}$ with initial vertex $v^m_{\frac{n_m+1}{2}}$ and final vertex $v^m_{\frac{n_m+1}{2}+1}$, that is,
\begin{equation*}
P_{n_m-1}:=v^m_{\frac{n_m+1}{2}}v^m_{\frac{n_m+1}{2}+\lfloor\frac{n_m}{2}\rfloor}v^m_{\frac{n_m+1}{2}+2\lfloor\frac{n_m}{2}\rfloor}\dots v^m_{\frac{n_m+1}{2}+(n_m-2)\lfloor\frac{n_m}{2}\rfloor},
\end{equation*}
where $\frac{n_m+1}{2}+(n_m-2)\lfloor\frac{n_m}{2}\rfloor\equiv \frac{n_m+1}{2}+1 \ ( mod \ n_m)$.

Also, the set of edges $A_3$ form $m-2$ paths $P_{n_i-2}$, $i\in \{2,3,\dots, m-1\}$, with initial vertex $v^i_{2}$ and final vertex $v^i_{\frac{n_i+1}{2}}$, that is, 
\begin{equation*}
P_{n_i-2}:=v^i_2v^i_{2+\lceil\frac{n_i}{2}\rceil}v^i_{2+2\lceil\frac{n_i}{2}\rceil}\dots v^i_{2+(n_i-3)\lceil\frac{n_i}{2}\rceil},
\end{equation*}
where $2+(n_i-3)\lceil\frac{n_i}{2}\rceil \equiv \frac{n_i+1}{2} \ ( mod \ n_i)$. 
\begin{Lem}\label{L4}
	Let $\mathcal{C}(C_{n_1},C_{n_2},\dots,C_{n_m})$ ba a chain cycle constructed by odd cycles with respect to the vertices $\{v^{i}_{\frac{n_{i}+1}{2}+1}, v^{i+1}_{1} \mid i=1,2,\dots,m-1\}$ and each $n_i\geq 5$. Then $\alpha(\mathcal{C}_{SR}( C_{n_1},C_{n_2}$ 
	$,\dots,C_{n_m}) =m-1+\lfloor\frac{n_1}{2}\rfloor+\lfloor\frac{n_m}{2}\rfloor+\sum_{i=2}^{m-1}\lfloor\frac{n_i-2}{2}\rfloor$.
\end{Lem}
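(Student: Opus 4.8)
The plan is to compute $\alpha(\mathcal{C}_{SR})$ directly by establishing matching upper and lower bounds, using the structural description of $\mathcal{C}_{SR}$ obtained above. Recall that the non‑isolated part of $\mathcal{C}_{SR}$ is the vertex‑disjoint union of the path $A_1\cong P_{n_1-1}$ (with endpoints $v^1_1,v^1_2$), the path $A_2\cong P_{n_m-1}$ (with endpoints $v^m_{\frac{n_m+1}{2}},v^m_{\frac{n_m+1}{2}+1}$), and the $m-2$ paths $A_3\cong P_{n_i-2}$ (with endpoints $v^i_2,v^i_{\frac{n_i+1}{2}}$), together with the cross edges $B_1\cup B_2\cup B_3\cup B_4$. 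Writing $L=\{v^1_1,v^1_2\}\cup\{v^i_2\mid 2\le i\le m-1\}$ and $R=\{v^m_{\frac{n_m+1}{2}},v^m_{\frac{n_m+1}{2}+1}\}\cup\{v^i_{\frac{n_i+1}{2}}\mid 2\le i\le m-1\}$, each of size $m$, these cross edges form exactly the complete bipartite graph between $L$ and $R$ with the matching $M=\{\{v^i_2,v^i_{\frac{n_i+1}{2}}\}\mid 2\le i\le m-1\}$ of $m-2$ pairs deleted; in particular every cross edge has one end in $L$. The vertices of $V_1$ are isolated and irrelevant to the cover.

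For the upper bound I would exhibit an explicit cover $S$: take a minimum cover on each path, but arrange that $S$ contains all of $L$ while avoiding both endpoints of $A_2$. Concretely, force both endpoints of the even path $A_1$ into $S$ (cost $\lfloor n_1/2\rfloor+1$), force the single endpoint $v^i_2$ of each odd path $A_3$ into $S$ (cost $\lfloor\frac{n_i-2}{2}\rfloor+1$, since an odd path's minimum cover omits its endpoints), and cover $A_2$ avoiding both endpoints (cost $\lfloor n_m/2\rfloor$). Since $L\subseteq S$ and every cross edge meets $L$, all of $B_1,\dots,B_4$ are covered, so $S$ is a vertex cover of size $\lfloor n_1/2\rfloor+\lfloor n_m/2\rfloor+\sum_{i=2}^{m-1}\lfloor\frac{n_i-2}{2}\rfloor+(m-1)$.

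For the lower bound, let $S$ be any vertex cover. Since the paths partition the non‑isolated vertices, $|S|\ge\sum_{\text{paths}}|S\cap P|$ and each $S\cap P$ covers $P$, so the base $\lfloor n_1/2\rfloor+\lfloor n_m/2\rfloor+\sum_{i=2}^{m-1}\lfloor\frac{n_i-2}{2}\rfloor$ is immediate; I must force a surplus of $m-1$. The key facts are that the odd path $P_{n_i-2}$ has a \emph{unique} minimum cover, which omits both endpoints, so $|S\cap A_3|$ exceeds its base whenever $S$ contains an endpoint of $A_3$, and that an even path's cover exceeds its base whenever $S$ contains \emph{both} of its endpoints. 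Set $X=L\setminus S$ and $Y=R\setminus S$; as $S$ covers every cross edge, no cross edge joins $X$ to $Y$, i.e.\ $X\times Y\subseteq M$. A case analysis on $X=\emptyset$, on $Y=\emptyset$, and on both nonempty (which forces $|X|=|Y|=1$ with that pair in $M$) shows in every case that the total surplus over the path bases is at least $m-1$, whence $|S|\ge\lfloor n_1/2\rfloor+\lfloor n_m/2\rfloor+\sum_{i=2}^{m-1}\lfloor\frac{n_i-2}{2}\rfloor+(m-1)$. Combined with the construction, this yields the claimed value of $\alpha(\mathcal{C}_{SR})$.

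The main obstacle is the surplus bookkeeping in the lower bound: one must combine the uniqueness of the odd‑path minimum cover with the constraint $X\times Y\subseteq M$ to argue that the dense cross‑edge structure cannot be covered "for free" by the path covers, and that exactly $m-1$ path endpoints are forced into $S$ beyond the individual path minima. The upper‑bound construction and the elementary path‑cover computations are routine.
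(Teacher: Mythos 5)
Your proof is correct, and on the upper bound it essentially reproduces the paper's construction: both arguments take a cover of each of the disjoint paths $P_{n_1-1}$, $P_{n_m-1}$, $P_{n_i-2}$ forced to contain $v^1_1,v^1_2$ and each $v^i_2$, at a total surplus of $m-1$ over the sum of the path cover numbers $\lfloor\frac{n_1}{2}\rfloor+\lfloor\frac{n_m}{2}\rfloor+\sum_{i=2}^{m-1}\lfloor\frac{n_i-2}{2}\rfloor$. (Your choice to cover $P_{n_m-1}$ while avoiding both of its endpoints is a harmless cosmetic difference from the paper, which includes $v^m_{\frac{n_m+1}{2}}$ at the same cost $\lfloor\frac{n_m}{2}\rfloor$; and your description of the cross edges as $K_{m,m}$ between $L$ and $R$ minus the matching $M$ on the middle indices quietly repairs a slip in the paper, which claims that $v^1_1$ together with the $v^i_2$ already cover $B_1\cup B_2$, overlooking the cross edges incident to $v^1_2$.) Where you genuinely diverge is the lower bound: the paper merely asserts that its cover is minimum, whereas you actually prove it. Your surplus argument is sound: the restriction of any cover $S$ to a path must cover that path; a cover of an odd path containing an endpoint exceeds the minimum by at least one; a cover of an even path containing both endpoints exceeds it by at least one; and the constraint $X\times Y\subseteq M$ with $M$ a matching forces either $L\subseteq S$, or $R\subseteq S$, or $L\setminus S=\{v^{i_0}_2\}$ and $R\setminus S=\{v^{i_0}_{\frac{n_{i_0}+1}{2}}\}$ for a single middle index $i_0$. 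In the three cases the forced endpoints yield surpluses of $1+(m-2)$, $1+(m-2)$, and $1+1+(m-3)$ respectively, each equal to $m-1$, which matches the construction. This case analysis is exactly the content the paper omits, so it should be written out in full rather than summarized; with that done, your argument is a complete and strictly stronger proof of the lemma.
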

\begin{proof}
	We construct a vertex cover of  $\mathcal{C}_{SR}( C_{n_1},C_{n_2},\dots,C_{n_m})$ with minimum cardinality. Note that the $m$ vertices $v^1_1$ and $v^i_2$, $i\in \{2,3,\dots,m-1\}$, are nonadjacent vertices in $\mathcal{C}_{SR}( C_{n_1},C_{n_2},\dots,C_{n_m})$ and cover all edges of sets $B_1$, $B_2$, $B_3$ and $B_4$. Let $S=\{v^1_1,v^i_2\mid i=2,3,\dots,m-1\}$. In order to cover the edges of path $P_{n_1-1}$ we need $\frac{n_1-1}{2}$ vertices. Since $v^1_1,v^1_2\in S$, therefore we must take $\frac{n_1-1}{2}-1$ more vertices of $P_{n_1-1}$. Thus we take the vertices $v^1_{1+2\lfloor\frac{n_1}{2}\rfloor},v^1_{1+4\lfloor\frac{n_1}{2}\rfloor},\dots,v^1_{1+(n_1-3)\lfloor\frac{n_1}{2}\rfloor}$ in $S$. That is, we augment the set $S$ by taking $S:=S\cup\{v^1,v^1_{1+2\lfloor\frac{n_1}{2}\rfloor},v^1_{1+4\lfloor\frac{n_1}{2}\rfloor},\dots,v^1_{1+(n_1-3)\lfloor\frac{n_1}{2}\rfloor}\}$. Similarly, to cover the edges of the path $P_{n_m-1}$, we must take $\frac{n_m-1}{2}$ vertices of $P_{n_m-1}$. Thus we take the vertices $v^m_{\frac{n_m+1}{2}},v^m_{\frac{n_m+1}{2}+2\lfloor\frac{n_m}{2}\rfloor},v^m_{\frac{n_m+1}{2}+4\lfloor\frac{n_m}{2}\rfloor},\dots,v^m_{\frac{n_m+1}{2}+(n_m-3)\lfloor\frac{n_m}{2}\rfloor}$ in $S$. That is, we again augment the set $S$ by taking $S:=S\cup\{v^m_{\frac{n_m+1}{2}},v^m_{\frac{n_m+1}{2}+2\lfloor\frac{n_m}{2}\rfloor},v^m_{\frac{n_m+1}{2}+4\lfloor\frac{n_m}{2}\rfloor},\dots,$
	$v^m_{\frac{n_m+1}{2}+(n_m-3)\lfloor\frac{n_m}{2}\rfloor}\}$. Finally, to cover the edges of each path $P_{n_i-2}$ we must take $\lceil\frac{n_i-2}{2}\rceil$ vertices of $P_{n_i-1}$, $i\in \{2,3,\dots,m-1\}$. Since $v^i_2\in S$, therefore we take $\lfloor\frac{n_i-2}{2}\rfloor$ more vertices of $P_{n_i-1}$, $i\in\{2,3,\dots,m-1\}$. Thus we take the vertices $v^i_{2+2\lceil\frac{n_i}{2}\rceil},v^i_{2+4\lceil\frac{n_i}{2}\rceil},\dots,v^i_{2+(n_i-3)\lceil\frac{n_i}{2}\rceil}$, $i\in\{2,3,\dots,m-2\}$ in $S$. That is, we again augment the set $S$ by taking $S:=S\cup\{v^i_{2+2\lceil\frac{n_i}{2}\rceil},v^i_{2+4\lceil\frac{n_i}{2}\rceil},\dots,v^i_{2+(n_i-3)\lceil\frac{n_i}{2}\rceil}\mid i=2,3,\dots,m-2\}$. Thus $S$ is the vertex cover of the strong resolving graph of the chain cycle $\mathcal{C}(C_{n_1},$ 
	$C_{n_2},\dots,C_{n_m})$ with minimum cardinality $|S|=m-1+\lfloor\frac{n_1}{2}\rfloor+\lfloor\frac{n_m}{2}\rfloor+\sum_{i=2}^{m-1}\lfloor\frac{n_i-2}{2}\rfloor$. 
\end{proof}
\begin{Thm}
	Let $\{C_{n_i}\}_{i=1}^m$ be $m$ disjoint cycles with each $n_i$ is odd and $n_i\geq 5$, then $sdim(\mathcal{C}(C_{n_1},$ 
	$C_{n_2},\dots,C_{n_m}))=m-1+\lfloor\frac{n_1}{2}\rfloor+\lfloor\frac{n_m}{2}\rfloor+\sum_{i=2}^{m-1}\lfloor\frac{n_i-2}{2}\rfloor$.	
\end{Thm}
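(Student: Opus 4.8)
The plan is to obtain the strong metric dimension directly from the vertex-cover computation already carried out for the strong resolving graph, exactly mirroring the argument used in the even-cycle case. The essential tool is Theorem \ref{T3} of Oellermann and Peters-Fransen, which asserts that for any connected graph $G$ one has $sdim(G)=\alpha(G_{SR})$. Thus the whole problem of computing $sdim(\mathcal{C}(C_{n_1},C_{n_2},\dots,C_{n_m}))$ reduces to determining the vertex cover number of the strong resolving graph $\mathcal{C}_{SR}(C_{n_1},C_{n_2},\dots,C_{n_m})$.

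First I would invoke Lemma \ref{L4}, which states precisely that
\[
\alpha(\mathcal{C}_{SR}(C_{n_1},C_{n_2},\dots,C_{n_m}))=m-1+\left\lfloor\frac{n_1}{2}\right\rfloor+\left\lfloor\frac{n_m}{2}\right\rfloor+\sum_{i=2}^{m-1}\left\lfloor\frac{n_i-2}{2}\right\rfloor.
\]
Combining this equality with Theorem \ref{T3} yields the claimed formula for $sdim$ immediately, with no further computation required. Accordingly, the proof can be presented in a single sentence: the result follows from Lemma \ref{L4} and Theorem \ref{T3}.

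The substantive work has in fact already been done upstream, so there is no genuine obstacle remaining at this final step. The real difficulty lies in Theorem \ref{T2}, which identifies all mutually maximally distant pairs in the odd-cycle chain, both the intra-cycle pairs governed by the diameter condition of part $(a)$ and the four families of inter-cycle pairs enumerated in part $(b)$. These MMD pairs determine the edge set of $\mathcal{C}_{SR}$, which decomposes into the paths $P_{n_1-1}$, $P_{n_m-1}$, the $m-2$ paths $P_{n_i-2}$ arising from $A_3$, together with the edges of $B_1,\dots,B_4$ incident to the vertices $v^1_1$ and $v^i_2$. Once this structure is in hand, Lemma \ref{L4} selects a minimum vertex cover by covering each path optimally while reusing the vertices $v^1_1$ and $v^i_2$ that already dominate the $B$-edges. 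Hence the final theorem is a direct corollary of the two preceding results and requires only the explicit invocation of Theorem \ref{T3}.
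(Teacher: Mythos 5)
Your proposal is correct and follows exactly the same route as the paper: the result is obtained by combining Lemma \ref{L4}, which computes $\alpha(\mathcal{C}_{SR}(C_{n_1},C_{n_2},\dots,C_{n_m}))$, with Theorem \ref{T3} of Oellermann and Peters-Fransen. Your additional remarks correctly locate the substantive work upstream in Theorem \ref{T2} and Lemma \ref{L4}, so nothing further is needed at this final step.
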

\begin{proof}
	The proof follows from Lemma \ref{L4} and Theorem \ref{T3}.
\end{proof}



\section*{\textbf{Acknowledgement}}We would like to thank the editor and the referees. We would also like to thank the Higher Education Commission of Pakistan to support this research under grant No. 20-3067/NRPU /R$\&$D/HEC/12.

\end{document}